\newcommand{\rr}{\mathbf{\mathbb{R}}}
\newcommand{\cc}{\mathbf{\mathbb{C}}}
\newcommand{\nn}{\mathbf{\mathbb{N}}}
\newcommand{\ve}{\varepsilon}
\newcommand{\vf}{\varphi}
\newcommand{\grad}{\nabla}
\newcommand{\length}{\operatorname{length}}
\newcommand{\dist}{\operatorname{dist}}
\newcommand{\Int}{\operatorname{Int}}
\newcommand{\graph}{\operatorname{graph}}
\newcommand{\KS}{\operatorname{\mathcal K}}
\newcommand{\KSS}{\operatorname{\mathcal S}}
\theoremstyle{plain}
\newtheorem{twr}{Theorem}[section]
\newtheorem{lemat}[twr]{Lemma}
\newtheorem{cor}[twr]{Corollary}
\newtheorem{prop}[twr]{Proposition}
\newtheorem{fact}{Fact}
\numberwithin{equation}{section}
\theoremstyle{definition}
\newtheorem{remark}[twr]{Remark}
\newtheorem{exa}[twr]{Example}
\begin{document}

\title[Convexifying positive polynomials and s.o.s. approximation]{Convexifying positive polynomials\\ and {sums of squares} approximation}
\author{Krzysztof Kurdyka, Stanis{\l}aw Spodzieja}
\thanks{This research was partially supported by OPUS Grant No 2012/07/B/ST1/03293 (Poland) and by ANR Project STAAVF (France)}
 \subjclass[2010]{Primary 11E25, 12D15; Secondary 26B25.}
 \keywords{Polynomial, sum of squares, convex function, semialgebraic set, optimization.}

\date{\today}
\begin{abstract} 
We show that if a polynomial $f\in \rr[x_1,\ldots,x_n]$ 
 is  nonnegative on a closed basic semialgebraic set $X=\{x\in\rr^n:g_1(x)\ge 0,\ldots,g_r (x)\ge 0\}$,\linebreak where $g_1,\ldots,g_r\in\rr[x_1,\ldots,x_n]$,  then $f$ can be approximated uniformly on compact sets by polynomials of the form $\sigma_0+\vf(g_1)  g_1+\cdots +\vf(g_r) g_r$, 
where $\sigma_0\in \rr[x_1,\ldots,x_n]$ and $\vf\in\rr[t]$ are sums of squares of polynomials. In particular, if $X$ is compact, and $h(x):=R^2-|x|^2 $ is positive on $X$, then $f=\sigma_{0}+\sigma_1  h+\vf(g_1) g_1+\cdots +\vf(g_r) g_r$ for some sums of squares 
$\sigma_{0},\sigma_1\in \rr[x_1,\ldots,x_n]$ and $\vf\in\rr[t]$,
 where $|x|^2={x_1^2+\cdots+x_n^2}$. 
 We apply  a quantitative version of those results to semidefinite optimization methods. Let $X$ be a convex closed semialgebraic subset of $\rr^n$ and let $f$ be a polynomial which is positive on $X$.  We give  necessary and sufficient conditions for the existence of  an exponent $N\in\mathbb{N}$ such that $(1+|x|^2)^Nf(x)$ is a convex function on $X$. We apply this result to  searching for lower  critical points of polynomials on convex  compact semialgebraic sets.
\end{abstract}
\maketitle

\section*{Introduction}

In the paper we study  two types of problems for  polynomials which are positive (or nonnegative) on   subsets of $\rr^n$.
In the first part  we prove  stronger versions of known approximation  and representation theorems  with sums of squares of polynomials. Next we give quantitative    versions  of  these
results and explain some applications  to semidefinite optimization methods. In the second part we prove that any  polynomial $f$ which is positive  on a convex closed set  $X$ becomes strongly convex when multiplied by $(1+|x|^2)^N$ with $N$ large enough (the noncompact case requires some extra assumptions).  In fact we give  an explicit estimate for $N$, which depends on the size of the coefficients of $f$ and on the lower bound of $f$ on~$X$.  As an application of our convexification method we propose an algorithm which for a given polynomial $f$ on a  compact semialgebraic set $X$  produces a sequence (starting from an arbitrary point in $X$) which converges to a critical point of $f$ on $X$.  We also relate convexity and positivity issues.

\subsection{Notation and state of the art}\label{notation}
We denote by $\rr[x]$ or $\rr[x_1,\ldots,x_n]$ the ring of polynomials in $x=(x_1,\ldots,x_n)$ with coefficients in $\rr$. 
Important  problems of real algebraic geometry are representations of nonnegative polynomials on closed semialgebraic sets.  
 Recall  Hilbert's 17th problem  (solved  by E.~Artin \cite{Artin}): if $f\in \rr[x]$ is nonnegative on $\rr^n$,  then 
 \begin{equation}\label{AH}\tag{AH}
fh^2=h_1^2+\cdots+h_m^2 \quad\hbox{for some $h,h_1,\ldots,h_m\in \rr[x]$, $h\ne 0$,}
\end{equation}
that is, $f$ is a sum of squares of rational functions. With the additional assumptions that $f$ is  homogeneous   and  $f(x)>0$ for $x\ne 0$, B.~Reznick \cite[Theorem 3.12]{Reznick} proved that there exists an integer $r_0$ such that for any $N\ge r_0$ the polynomial $(x_1^2+\cdots+x_n^2)^Nf(x)$ is a sum of even powers of linear functions.

Let $X\subset \rr^n$ be a \emph{closed basic semialgebraic set} defined by $g_1,\ldots,g_r\in \rr[x]$, i.e.,
\begin{equation}\label{semialgebraicbasicform}
X=\{x\in \rr^n:g_1(x) \ge 0,\ldots,g_r(x)\ge 0\}.
\end{equation}
The \emph{preordering} generated by $g_1,\ldots,g_r$ is defined to be
$$
T(g_1,\ldots,g_r)=\Big\{\sum_{e=(e_1,\ldots,e_r)\in\{0,1\}^r
}\sigma_e g_1^{e_1}\cdots g_r^{e_r}:\sigma_e\in \sum 
\rr[x]^2\hbox{ for }e\in\{0,1\}^r\Big\},
$$
where $\sum \rr[x]^2$ denotes the set of sums of squares (s.o.s.) of polynomials from $\rr[x]$. 
Natural generalizations of the above  theorem of Artin are the Stellens\"atze of \hbox{J.-L.~Krivine} \cite{Krivine}, D. W. Dubois \cite{Dubois}, and J.-J. Risler \cite{Risler} (see also \cite{BCRoy}). For references and a more detailed discussion of this subject see for instance \cite{Sr2}, \cite{Marshall}, \cite{PD}. When the set $X$ is compact, a very important  result   was  obtained by K.~Schm\"udgen (see \cite{Sn},  \cite{CMN}): every strictly positive polynomial $f$ on $X$ belongs to
the preordering  $T(g_1,\ldots,g_r)$. M.~Schweighofer \cite{Schweighofer1}  studied degree bounds in the Schm{\"u}dgen Positivstellensatz representation  
$$f=\sum_{e\in\{0,1\}^r}\sigma_e g_1^{e_1}\cdots g_r^{e_r}\in T(g_1,\ldots,g_r).$$ 
 He obtained an  upper bound  for $\deg \sigma_e g_1^{e_1}\cdots g_r^{e_r}$ in terms of $\deg f$,  $f^*:=\min \{f(x): x\in X\}$ and the coefficients of $f$, provided that
 $f^*>0$. As  shown by C.~Scheiderer \cite{Sr1}, there is no such   bound in terms of $\deg f$ unless $\dim (X) \le 1$. Under some additional assumptions M. Putinar \cite{Putinar} proved that $f$ belongs to the \emph{quadratic module} generated by $g_1,\ldots,g_r$,
$$
P(g_1,\ldots,g_r):=\Big\{\sigma_0+\sigma_1g_1+\cdots+\sigma_rg_r:\sigma_i\in\sum\rr[x]^2,\,i=0,\ldots,r\Big\}.
$$
The above results concern strictly positive polynomials. In the case of nonnegative polynomials C. Berg, J.~P.~R.~Chris\-ten\-sen and P. Ressel  \cite{BCR} and J.~B.~Lasserre and T.~Netzer \cite[Corollary 3.3]{ln} proved  that any  polynomial $f$ which is nonnegative on $[-1,1]^n$  can be approximated in the $l_1$-norm by sums of squares of polynomials.
 The $l_1$-norm of a polynomial is defined to be the sum of the absolute values
of its coefficients (in the usual monomial basis). Hence we have 

\begin{fact}\label{fact1}
 If a polynomial $f\in\rr[x]$ is nonnegative on $[- R,R]^n$, $R>0$, then the polynomial $f(R x)$ can be  approximated in the $l_1$-norm by sums of squares of polynomials. 
In particular $f(x)$ can be uniformly approximated on $[-R,R]^n$ by sums of squares of polynomials.
\end{fact}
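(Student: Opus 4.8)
The plan is to deduce this statement directly from the cited theorem of C.~Berg, J.~P.~R.~Christensen and P.~Ressel \cite{BCR} and of J.~B.~Lasserre and T.~Netzer \cite[Corollary 3.3]{ln} on the cube $[-1,1]^n$, by a linear change of variables, and then to pass from $l_1$-approximation to uniform approximation on a cube via the elementary bound $|p(x)|\le\|p\|_1$ valid for $x\in[-1,1]^n$.

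First I would note that $x\in[-1,1]^n$ if and only if $Rx\in[-R,R]^n$, so the polynomial $g(x):=f(Rx)$ is nonnegative on $[-1,1]^n$. By \cite{BCR}, \cite[Corollary 3.3]{ln} there is a sequence $\sigma_k\in\sum\rr[x]^2$ with $\|g-\sigma_k\|_1\to 0$ as $k\to\infty$; since $g=f(Rx)$, this is exactly the first assertion of Fact~\ref{fact1}.

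For the ``in particular'' part, I would use that for every $p=\sum_\alpha c_\alpha x^\alpha\in\rr[x]$ and every $x\in[-1,1]^n$ one has $|p(x)|\le\sum_\alpha|c_\alpha|\,|x^\alpha|\le\sum_\alpha|c_\alpha|=\|p\|_1$. Applying this to $p=g-\sigma_k$ gives $\sup_{[-1,1]^n}|g-\sigma_k|\le\|g-\sigma_k\|_1\to 0$, so $\sigma_k\to g$ uniformly on $[-1,1]^n$. Substituting $x\mapsto x/R$ --- which maps $[-R,R]^n$ onto $[-1,1]^n$ and carries a sum of squares to a sum of squares --- we obtain polynomials $\tau_k(x):=\sigma_k(x/R)\in\sum\rr[x]^2$ with $\tau_k(x)\to g(x/R)=f(x)$ uniformly on $[-R,R]^n$, which is the second assertion.

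There is essentially no real obstacle here beyond invoking the quoted approximation theorem on $[-1,1]^n$; the only points requiring (minor) care are that an invertible scaling of the variables preserves the class of sums of squares, and that $l_1$-convergence of polynomials forces uniform convergence on the unit cube (and hence, after scaling, on $[-R,R]^n$).
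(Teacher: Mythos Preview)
Your argument is correct and is precisely the deduction the paper intends: the paper states Fact~\ref{fact1} as an immediate consequence of the cited $[-1,1]^n$ result without writing out a proof, and your rescaling $x\mapsto Rx$ together with the elementary bound $|p(x)|\le\|p\|_1$ on $[-1,1]^n$ supplies exactly the missing details.
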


D. Hilbert \cite{hilbert} proved  that   for $n\ge 2 $ there are nonnegative  polynomials on $\rr^n$ which are not sums of squares of polynomials.
T. S. Motzkin  \cite{Motzkin} gave an  explicit example of such a polynomial,  $f(x_1,x_2)=1+x_1^2x_2^2(x_1^2+x_2^2-3)$, i.e., 
 in the representation  \eqref{AH} of $f$ the degree of $h$ must be positive. So in general the Schm{\"u}dgen Positivstellensatz does not hold on noncompact sets. For  a polynomial $f$ positive on a noncompact set $X$ the problem arises of approximation of $f$ by elements of the preordering $T(g_1,\ldots,g_r)$ or of the quadratic module $P(g_1,\ldots,g_r)$. In this connection J.~B.~Lasserre 
\cite[Theorem 2.6]{las2} (see also \cite{Las0}) proved that if $g_1,\ldots,g_r$ are concave polynomials such that $g_1(z)>0,\ldots g_r(z)>0$ for some $z\in X$, then any convex polynomial nonnegative on $X$ can be approximated in the $l_1$-norm  by polynomials from the set
$$
L_c(g_1,\ldots,g_r):=\Big\{\sigma_0+\lambda^2_1g_1+\cdots+\lambda^2_rg_r:\sigma_0\in\sum\rr[x]^2\hbox{ convex, } \lambda_1,\ldots,\lambda_r\in\rr\Big\}.
$$
For $X=\rr^n$ the approximation is uniform on compact sets.  J. B. Lasserre \cite{Las0} proved that if a polynomial $f\in\rr[x]$ has a global
minimum $f^*\ge  0$ then for every $\varepsilon > 0$ there is $N\in\nn$ such that the polynomial 
$
f_\varepsilon := f +\varepsilon \sum_{k=1}^{N}\sum_{j=1}^n\frac{x_j^{2k}}{k!}
$ 
is a sum of squares (see also \cite{lr4} for polynomials on real algebraic sets).

\subsection{ Our contributions}\label{contribution}

In this article, we prove an analogue of the Schm\"udgen and Putinar theorems for a smaller cone.
Namely for   $g\in\rr[x]$ we put 
\begin{multline*}
\KS(g,g_1,\ldots,g_r):
=\Big\{\sigma_0+\sigma_1 g+\vf(g_1) g_1+\cdots +\vf(g_r)g_r: 
\sigma_0,\sigma_1\in\sum\rr[x]^2,\\ 
\vf\in\sum\rr[t]^2\Big\},
\end{multline*}
where  $t$ is a single variable. Note that if we set  
\[
\Phi(g_1,\ldots,g_r):=\left\{\vf(g_1) g_1+\cdots +\vf(g_r)g_r: \vf\in\sum\rr[t]^2\right\},
\]
then
\begin{equation*}
\KS(g,g_1,\ldots,g_r)=T(g)+\Phi(g_1,\ldots,g_r),
\end{equation*}
where $\mathcal{A}+\mathcal{B}=\{a+b:a\in \mathcal{A},\, b\in \mathcal{B}\}$. 
In Section~\ref{sectionapproximation1} we prove  (Theorem~\ref{approximation1}) 
that for a closed basic semialgebraic set $X$ defined by $g_1,\ldots,g_r\in \rr[x]$ and a polynomial $f\in\rr[x]$ 
the following conditions are equivalent:
\begin{enumerate}
\item[(i)]  $f$ is nonnegative on  $X$,
\item[(ii)]  $f$ can be uniformly approximated on compact sets by polynomials from the cone  
\[
\KSS(g_1,\ldots,g_r):=\sum\rr[x]^2+\Phi(g_1,\ldots,g_r).
\]
Moreover, $f$ can be approximated by polynomials from $\KSS(g_1,\ldots,g_r)$ in the $l_1$-norm. 
\end{enumerate}
 In particular, if $X$ is a compact set and $g(x):=R^2-|x|^2\ge 0$ for $x\in X$, then 
(see Corollary \ref{approximationPutinar})
\begin{equation}\label{equivalence1}
\hbox{$f$ is strictly positive on  $X$}\quad\Longrightarrow\quad\hbox{$f \in \KS(g,g_1,\ldots,g_r)$.}
\end{equation}

\subsection{ Application to optimization}\label{apploptsection}

In \cite{Las-1} Lasserre gave a method of minimizing a polynomial $f$ on a compact basic semialgebraic set $X$ of the  form \eqref{semialgebraicbasicform}. More precisely, let
$$
f^*:=\inf\{f(x):x\in X\}.
$$
Then $f^*=\sup\{a\in\rr:f(x)-a>0\hbox{ for }x\in X\}$, and by Putinar's result \cite{Putinar}, 
$$
f^*=\sup\{a\in\rr:f-a \in P(g_1,\ldots,g_r)\},
$$
or equivalently
$$
f^*= \inf\{L(f) : L : \rr[x ] \to \rr \hbox{ is linear, } L(1) = 1,\, L(P(g_1,\ldots,g_r)) \subset [0,\infty)\}.
$$
Denote
$$
P_k(g_1,\ldots,g_r):=\Big\{\sigma_0g_0+\cdots+\sigma_rg_r\in P(g_1,\ldots,g_r):\deg \sigma_ig_i\le k,\,i=0,\ldots,r\Big\},
$$
where we set $g_0=1$. 
Lasserre considered the following optimization problems:
\[
\begin{split}
&\hbox{maximize }a\in\rr: f-a\in P_k(g_1,\ldots,g_r),\\
&\hbox{minimize }L(f)\hbox{ for }L:\rr[x]_k\to\rr,\hbox{ linear, }\, L(1) = 1,\, L(P_k(g_1,\ldots,g_r)) \subset [0,\infty),
\end{split}
\]
where $\rr[x]_k$ is the linear space of polynomials $h\in\rr[x]$ such that $\deg h\le k$. Set
\[
\begin{split}
a_k^*&:=\sup\{a\in\rr:f-a\in P_k(g_1,\ldots,g_r)\},\\
l_k^*&:=\inf\{L(f):L:\rr[x]_k\to\rr\hbox{ is linear, } L(1) = 1,\, L(P_k(g_1,\ldots,g_r)) \subset [0,\infty)\},
\end{split}
\]
\noindent for sufficiently large  $k\in\nn$.
Lasserre proved that $(a_k^*)$, $(l_k^*)$ are increasing sequences that converge to $f^*$ and  $a_k^*\le l_k^*\le f^*$ for $k\in\nn$. 

 We obtain a version of the Lasserre theorem for 
\begin{multline*}
\KS_k(g,g_1,\ldots,g_r):
=\{\sigma_0+\sigma_1 g+\vf(g_1) g_1+\cdots +\vf(g_r)g_r \in \KS(g,g_1,\ldots,g_r):\\
 \deg \sigma_0,\deg \sigma_1 g, 
\deg g_i\vf(g_i)\le k\}, \quad k\in\nn.
\end{multline*}
The implication \eqref{equivalence1} allows us to apply the Lasserre  algorithm of minimizing polynomials on basic compact semialgebraic sets by using $\KS_k(g,g_1,\ldots,g_r)$ instead of $P_k(g,g_1,\ldots,g_r)$ (see Remark \ref{KSoptimization}). 
 Consideration of the cones $\KS_k(g,g_1,\ldots,g_r)$ potentially simplifies the problem of minimizing polynomials on the set $X$, since these cones are properly  contained in $P_k(g,g_1,\ldots,g_r)$.

In Proposition \ref{concave2}, we present another method of minimizing a polynomial $f$ on a compact basic semialgebraic set $X$, say $X\subset \{x\in\rr^n:|x|\le R\}$. Namely, for any $\epsilon>0$, we give an effective procedure for calculating a polynomial $h\in\Phi(g_1,\ldots,g_r)$  
such that 
\begin{equation*}
\forall_{|y|\le R}\;\exists_{x\in X}\;f(y)-h(y)\ge f(x)-h(x)-\epsilon,
\end{equation*}
 and $|h(x)|<\epsilon$ for $x\in X$. In particular,
$$
f^*-2\epsilon \le \inf \{f(y)-h(y):|y|\le R\} \le f^*+2\epsilon.
$$
Thus, the problem of approximate minimization of $f$ can be reduced to the simpler case when the set $X$ is described by one inequality $R^2-|x|^2\ge 0$ (see Remark~\ref{Algorithm2}). In this case M.~Schweighofer \cite{Schweighofer1} gave the rate of convergence of the sequence 
$$
a_k^{**}:=\sup \{a\in\rr:f-h-a\in P_k(R^2-|y|^2)\}\to f^{**},\quad\hbox{as }k\to\infty,
$$
where $f^{**}:= \inf \{f(y)-h(y):|y|\le R\}$.

\subsection{Convexifying positive polynomials.}\label{convexnotation}

We will prove Theorem \ref{convexonconvexsetnew} which, we believe, is of independent interest: for any  polynomial $f$  positive on  a convex closed set  $X$,  whose   leading form is  strictly positive  in $\rr^n\setminus\{0\}$, there exists $N_0\in\nn$ such that for any integer $N\ge N_0$ the polynomial $\varphi_N(x)=(1+|x|^2)^Nf(x)$ is a strictly convex function on~$X$. In the case of homogeneous polynomials and $X=\rr^n$ the same result was obtained by Reznick \cite[Theorem 4.6]{Reznick2}, \cite[Theorem 3.12]{Reznick}.

First in Section  \ref{onevariableconvexsection} we consider the univariate case, and
 we  give an explicit bound for $N_0$ in terms of the coefficients of $f$ and the infimum~$f^*$. 
 We  also give an example to show that  $N_0$ cannot be a function of the degree of $f$ alone.

In Section \ref{convexityatinfinitysection} we prove that the convexity at infinity  of $\varphi_N(x)=(1+|x|^2)^Nf(x)$ for sufficiently large $N$ is equivalent to the strict positivity of the leading form of $f$ 
(Proposition \ref{pconvex1}). Moreover, in Corollary \ref{Reznickconpositivcor} we obtain an interpretation of  Reznick's result \cite[Theorem 3.12]{Reznick}  in terms of convexity.
As a consequence  of  Theorem \ref{convexonconvexsetnew} we  prove in  Corollary \ref{conpositivcor} that, if $X$ is a convex set containing at least two points, 
and  $d>\deg f$ is an even integer, then the following conditions are equivalent:

\begin{enumerate}
\item[(i)]  $f$ is nonnegative on  $X$,
\item[(iii)] for any $a,b>0$ there exists $N_0\in\nn$ such that for any integer $N\ge N_0$   the polynomial $\varphi_N(x)=(1+|x|^2)^N(f(x)+a|x|^{d}+b)$ is a strictly convex function on $X$. 
\end{enumerate}

Finally, we propose the following  algorithm. Given a compact convex  semialgebraic set $X$ and a polynomial $f:\rr^n \to \rr$, assume that  $f$ is  positive on $X$. Then by our convexification result, there exists an integer $N$ such that $\varphi_{N,\xi}(x):=(1+|x-\xi|^2)^Nf(x)$ is a convex function for any $\xi \in X$. (Actually one can take $N=6$.)
Choose  any $a_0 \in X$, and then by induction  set 
$
a_\nu:= \hbox{argmin}_X \,  \varphi_{N,{a_{\nu-1}}}.
$
In Theorem \ref{tw1extra}  we state that the limit  $a^*=\lim_{\nu\to\infty}a_\nu$ exists; moreover,  $a^* $ is a critical point of $f$ on $X$.
The proof requires subtle estimates for the lengths of gradient trajectories of $f$ on $X$.
Since the set of critical {{}values} is finite, this result gives a method for finding the minimum of $f$ on $X$.

\section{Approximation of nonnegative polynomials}\label{sectionapproximation1}

Let $X\subset \rr^n$ be a closed basic semialgebraic set defined by $g_1,\ldots,g_r\in \rr[x]$, i.e. of the form \eqref{semialgebraicbasicform}.

\begin{twr}\label{approximation1}
Let $f\in\rr[x]$ be  nonnegative on the set $X$. Then there exists a sequence $f_\nu\in P(g_1,\ldots,g_r)$, $\nu\in\nn$, that is uniformly convergent to $f$ on  compact subsets. Moreover,  $f_\nu$ can be chosen from the cone ${\KSS}(g_1,\ldots,g_r)$ \footnote{Recall that
${\KSS}(g_1,\ldots,g_r)=\{\sigma_0+\vf(g_1)g_1+\cdots+\vf(g_r)g_r:\sigma_0\in\sum\rr[x]^2,\, \vf\in\sum\rr[t]^2\}$.}.
 In particular $f_\nu$ converges to $f$ in the $l_1$-norm. 
\end{twr}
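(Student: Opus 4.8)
The plan is to reduce the statement to the already-available one-variable / box approximation machinery (Fact~\ref{fact1}) by absorbing the constraints $g_i\ge 0$ into the single-variable weight $\vf$. The key observation is the following elementary identity for a nonnegative polynomial $f$ on $X$: on the set $X$ each $g_i(x)\ge 0$, so if we could ``turn on'' a correction term only where some $g_i$ is negative, we would be done. Concretely, fix a large ball $B=[-R,R]^n$ and consider, for a parameter $\lambda\in\nn$, the auxiliary polynomial
\[
F_\lambda(x):=f(x)+\sum_{i=1}^r \psi_\lambda\bigl(g_i(x)\bigr)\,g_i(x),
\]
where $\psi_\lambda\in\sum\rr[t]^2$ is a one-variable s.o.s.\ chosen so that $\psi_\lambda(s)\to 0$ for $s\ge 0$ but $\psi_\lambda(s)\cdot s\to+\infty$ for $s<0$, uniformly on the relevant compact intervals. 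Such $\psi_\lambda$ exist: on a fixed interval $[-c,c]\ni g_i(B)$ one wants an s.o.s.\ which is tiny on $[0,c]$ and huge on $[-c,0)$; one builds it from a high power of a quadratic like $(1-s/\delta)^2$ suitably scaled, or more simply from the Chebyshev-type construction used to approximate $\max(0,-s)$ by sums of squares on a compact interval (this is exactly Fact~\ref{fact1} applied in one variable to the nonnegative-on-$[-c,c]$ function $s\mapsto \max(0,-s)^2/(\text{something})$). The upshot is that $F_\lambda$ is nonnegative on all of $B$ for $\lambda$ large: on $X\cap B$ because $f\ge 0$ and the added terms are $\ge 0$ there up to a small error, and on $B\setminus X$ because the term $\psi_\lambda(g_i(x))g_i(x)$ for an index $i$ with $g_i(x)<0$ is a large negative-times-negative, hence large positive, quantity that dominates the (bounded) rest.

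Once $F_\lambda\ge 0$ on $B=[-R,R]^n$, Fact~\ref{fact1} gives s.o.s.\ polynomials $\sigma^{(\lambda,\mu)}$ converging to $F_\lambda$ uniformly on $B$ (indeed in $l_1$-norm after rescaling) as $\mu\to\infty$. Then
\[
\sigma^{(\lambda,\mu)}+\sum_{i=1}^r\bigl(-\psi_\lambda(g_i)\bigr)g_i
\]
is \emph{not} yet of the required form because $-\psi_\lambda$ is not s.o.s.; so instead I subtract back using a \emph{different}, genuinely s.o.s.\ weight. The cleaner route: observe directly that $f_\lambda:=f+\sum_i\psi_\lambda(g_i)g_i$ already lies in $\KSS(g_1,\dots,g_r)+(\text{a polynomial nonnegative on }B)$, so it is really the s.o.s.\ term $\sigma_0$ that must absorb the approximation error together with the ``$f$ minus the $\Phi$-part'' discrepancy. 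Thus the final member of $\KSS(g_1,\dots,g_r)$ approximating $f$ is
\[
f_\nu:=\sigma_0^{(\nu)}+\vf^{(\nu)}(g_1)g_1+\cdots+\vf^{(\nu)}(g_r)g_r,
\qquad \vf^{(\nu)}:=\psi_{\lambda(\nu)},
\]
with $\sigma_0^{(\nu)}$ the s.o.s.\ approximant from Fact~\ref{fact1} to the polynomial $f-\sum_i\psi_{\lambda(\nu)}(g_i)g_i+(\text{nonneg.\ on }B)$ — one checks this bracket is nonnegative on $B$ by the same sign analysis, and it converges to $f$ as $\lambda(\nu)\to\infty$ since $\psi_{\lambda}(g_i)g_i\to 0$ on $B\supset$ (any prescribed compact set). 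A diagonal choice of the two parameters $\lambda,\mu$ as functions of $\nu$, together with an increasing sequence of radii $R=R_\nu\to\infty$, yields uniform convergence on \emph{all} compact subsets of $\rr^n$; the $l_1$-claim follows because on a fixed box $l_1$-convergence is what Fact~\ref{fact1} delivers and dominates uniform convergence, and the inclusion $\KSS(g_1,\dots,g_r)\subset P(g_1,\dots,g_r)$ is immediate since $\vf(g_i)\in\sum\rr[x]^2$.

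The main obstacle is the construction and uniform control of the one-variable s.o.s.\ weights $\psi_\lambda$: one needs simultaneously that $\psi_\lambda(s)\to 0$ for $s\in[0,c]$ (so that the added terms vanish on $X$ in the limit) and that $s\,\psi_\lambda(s)\to+\infty$ for $s\in[-c,0)$ fast enough to swamp the sup-norm of $f$ and of the other summands on the box $B$ — and all of this with $c$ and the box growing as $\nu\to\infty$. Making the quantifiers match (choosing $\lambda$ large depending on $R$, then the s.o.s.\ degree $\mu$ large depending on both) is the delicate bookkeeping; the existence of such $\psi_\lambda$ on a fixed compact interval is itself an instance of Fact~\ref{fact1} (approximate the nonnegative function $s\mapsto$ a large bump supported on $[-c,0]$ by s.o.s.), so no new hard input is needed, only care. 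Everything else — nonnegativity of $F_\lambda$ on $B$ by casework on the sign of the $g_i(x)$, passage to the $l_1$-norm, the inclusion into $P(g_1,\dots,g_r)$ — is routine.
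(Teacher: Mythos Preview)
Your strategy is the paper's: build a one-variable s.o.s.\ weight $\psi$ that is tiny on $[0,A]$ and huge on $[-A,-\delta]$, show that $f-\sum_i\psi(g_i)g_i\ge 0$ on the box $[-R,R]^n$, then apply Fact~\ref{fact1} to get an s.o.s.\ $\sigma_0$ approximating this difference, so that $\sigma_0+\sum_i\psi(g_i)g_i\in\KSS(g_1,\dots,g_r)$ approximates~$f$. However, your execution contains a sign error and a genuine gap.

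The sign error: since $\psi_\lambda\in\sum\rr[t]^2$ is nonnegative, at a point with $g_i(x)<0$ the product $\psi_\lambda(g_i(x))\,g_i(x)$ is \emph{nonpositive}, not ``large negative-times-negative, hence large positive.'' Thus your $F_\lambda=f+\sum_i\psi_\lambda(g_i)g_i$ is very negative on $B\setminus X$, not nonnegative; the correct object is $f-\sum_i\psi_\lambda(g_i)g_i$, which you eventually write down but justify by ``the same sign analysis'' (which was backwards). Likewise, your claim that $\psi_\lambda(g_i)g_i\to 0$ on all of $B$ is false off $X$. The gap: you never reduce from $f\ge 0$ on $X$ to $f>0$ on $X$ (the paper passes to $f+1/\nu$), and your casework on $B\setminus X$ ignores the buffer region where every $g_i\ge -\delta$ but $x\notin X$. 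There the negative $g_i$'s lie in $(-\delta,0)$, where $\psi_\lambda$ is not forced to be large, so the $\Phi$-part cannot help and positivity must come from $f$ itself. The paper handles this by a compactness argument (the sets $G_1\supset G_2\supset G_3$ in Lemma~\ref{interpolation1}): first choose $\eta>0$ so that the $\eta$-neighbourhood of $X$ inside $[-R,R]^n$ lies in $\{f>0\}$, then choose $\delta>0$ so that $\{g_i\ge -\delta\ \forall i\}\cap[-R,R]^n$ is contained in that neighbourhood, and only then build the weight separating $[-A,-\delta]$ from $[0,A]$. Without this step the three-case analysis does not close.
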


\begin{proof} Take any positive constants $\ve,\delta,A,B$. 
By the Weierstrass Approximation Theorem 
there exists a polynomial $\vf_{\ve,\delta,A,B}\in\rr[t]$ 
such that
\begin{alignat}{2}\label{stoneWeierstrass2}
\vf_{\ve,\delta,A,B}(t)&>B  &\quad  &\hbox{for } t\in [-A,-\delta],
\\
\label{stoneWeierstrass3}
\vf_{\ve,\delta,A,B}(t)&<\ve  &      &\hbox{for } t\in[0,A].
\end{alignat}
Taking $\vf^2_{\ve,\delta,A,B}$ if necessary, we may additionally assume that
\begin{equation}\label{stoneWeierstrass1}
\vf_{\ve,\delta,A,B}(t)\ge 0\quad\hbox{for } t\in\rr.
\end{equation}
Set
$$
g_{i,\ve,\delta,A,B}:=g_i\cdot \vf_{\ve,\delta,A,B}\circ g_i\quad\hbox{for } i=1,\ldots,r.
$$
Every nonnegative univariate polynomial is a sum of squares of polynomials, hence by \eqref{stoneWeierstrass1} we have 
\begin{equation}\label{gisos1}
\vf_{\ve,\delta,A,B}\circ g_i\in\sum \rr[x]^2\quad\hbox{for } i=1,\ldots,r.
\end{equation}

Since the sequence $h_\nu=f+\frac{1}{\nu}$, $\nu\in \nn$, uniformly converges to $f$, we may assume that $f$ is positive on $X$. 

Fix an arbitrary $R>1$ and let $M>1$ be a constant such that
\begin{equation}\label{esimationf1}
f(x)\ge -M \quad \hbox{for } x\in[-R,R]^n.
\end{equation}

Since $f$ is positive on $X$, we have
$$
X\cap [-R,R]^n\subset G_1, 
$$
where the set $G_1:= \{x\in[-R,R]^n:f(x)>0\}$ is open in $[-R,R]^n$. As $X\cap [-R,R]^n$ is a compact set, there exists $\eta>0$ such that
$$
G_2:=\{x\in[-R,R]^n:\dist(x,X)\le \eta\}\subset G_1.
$$
Since $\overline{[-R,R]^n\setminus G_2}=\{x\in[-R,R]^n:\dist (x,X)\ge \eta\}$ is also compact,  by the definition of $X$ there exists $\delta\in (0,1]$ such that 
\begin{equation}\label{estimationg1}
G_3:=\{x\in[-R,R]^n: g_i(x)\ge-\delta \hbox{ for } i=1,\ldots,r\}\subset G_2.
\end{equation}

Let 
$$
f^*_R:=\min\{f(x):x\in G_2\}.
$$
Obviously $f^*_R>0$.

Let $A\ge 1$ be a constant such that
$$
|g_i(x)|\le A\quad\hbox{for } x\in [-R,R]^n,\, i=1,\ldots,r.
$$
Take
\begin{equation}\label{epsilonB}
 \ve:=\frac{f^*_R}{(r+1)A},\qquad B:=A\frac{M+r\ve  }{\delta}.
\end{equation}
 
\begin{lemat}\label{interpolation1}
For any $x\in[-R,R]^n$ we have  $f(x)-\sum_{i=1}^r  g_{i,\ve,\delta,A,B}(x)>0$.
\end{lemat}

\begin{proof} Take $x\in [-R,R]^n$. 

If $x\in X$, then $g_i(x)\ge 0$ for $i=1,\ldots,r$, and by \eqref{stoneWeierstrass3},
$$
 g_{i,\ve,\delta,A,B}(x)= g_i(x)\cdot\vf_{\ve,\delta,A,B}\circ (g_i(x))\le A \ve <\frac{f^*_R}{r} \quad\hbox{for } i=1,\ldots,r.
$$
So  
$$
f(x)-\sum_{i=1}^r  g_{i,\ve,\delta,A,B}(x)>f^*_R-r\frac{f^*_R}{r}\ge 0,
$$
and the assertion holds.

Let $x\in G_3\setminus X$. Without loss of generality we may assume that 
$$
g_1(x),\ldots,g_k(x)\ge 0\quad\hbox{and}\quad g_{k+1}(x),\ldots,g_r(x)<0
$$
for some $0\le k<r$. Then by  \eqref{stoneWeierstrass3},
$$
 g_{i,\ve,\delta,A,B}(x) \le A \ve <\frac{f^*_R}{r} \quad\hbox{for } i=1,\ldots,k,
$$
and by \eqref{stoneWeierstrass1},
$$
 g_{i,\ve,\delta,A,B}(x)<0\quad\hbox{for } i=k+1,\ldots,r.
$$
Consequently, $f(x)-\sum_{i=1}^r  g_{i,\ve,\delta,A,B}(x)>f^*_R-k\frac{f^*_R}{r} > 0$, and the assertion holds.

Let now $x\in [-R,R]^n\setminus G_3$. Without loss of generality we may assume that 
$$
g_1(x),\ldots,g_k(x)\ge 0, \quad 0>g_{k+1}(x),\ldots,g_l(x) \ge -\delta,\quad  g_{l+1}(x),\ldots,g_r(x) < -\delta,
$$
where $0\le k\le l<r$. Then
$$
g_{i,\ve,\delta,A,B}(x) <\frac{f^*_R}{r+1}\quad\hbox{for } i=1,\ldots,k,
$$ 
and 
$$
g_{i,\ve,\delta,A,B}(x)<0\quad\hbox{for } i=k+1,\ldots,l.
$$
By \eqref{stoneWeierstrass2} we see that
$$
 g_{i,\ve,\delta,A,B}(x)<A(-M - r\ve)\le-M-\frac{r f^*_R}{r+1} \quad\hbox{for } i=l+1,\ldots,r.
$$
Summing up,
$$
f(x)-\sum_{i=1}^r  g_{i,\ve,\delta,A,B}(x)>-M-k\frac{f^*_R}{r+1} +(r-l)\left(M+\frac{r f^*_R}{r+1}\right) > 0,
$$
as desired.
\end{proof}

\begin{remark}\label{remark1}
The polynomial $\vf_{\ve,\delta,A,B}(t)$ in the above proof can be chosen  of the form
$$
\vf(t)=\left(\frac{1}{A}t-1+\frac{\delta}{2A}\right)^{2N}
$$
with $N\log(1-\frac{\delta}{2A})^2<\log \ve$, $N\log (\frac{\delta}{2A})^2<\log \ve$ and $N\log(1+\frac{\delta}{2A})^2>\log B$.  M.~Schweighofer \cite[Lemma 2.3]{Schweighofer2} in a similar problem proposes a polynomial $\varphi$ of the form $\varphi(t)=as(at-1)^{2N}$ for some $s\in\nn$ and $a>0$.
\end{remark}

By Lemma \ref{interpolation1}, for any $R>0$ there exists $\vf_{R}\in\sum\rr[t]^2$ such that
$$
f(x)-\sum_{i=1}^r\vf_{R} ( g_i(x))g_i(x)>0\quad\hbox{for } x\in[-R,R]^n.
$$
By Fact \ref{fact1}  in the Introduction,  it is easy to see that 
$f(x)-\sum_{i=1}^r\vf_{R} ( g_i(x))g_i(x)$ can be approximated in the $l_1$-norm by sums of squares of polynomials and it 
 can be approximated uniformly on $[-R,R]^n$ by sums of squares of polynomials.  Consequently, $f$ can be approximated uniformly on $[-R,R]^n$  (in particular in the $l_1$-norm) by polynomials from the cone $\KSS(g_1,\ldots,g_r)$.  Hence we deduce the assertion of Theorem \ref{approximation1}.
\end{proof}

\section{Quantitative aspects of Theorem \ref{approximation1}}\label{quantitative}
 In order to  estimate 
the rate of convergence in Lasserre's relaxation method \cite{Las-1} we show 
how to bound the degree of the polynomial $\varphi$ in  Theorem \ref{approximation1}. The key point is to find a lower bound for $\delta$ which satisfies the inclusion \eqref{estimationg1}.

 Assume now that $X$ is a compact  set of the  form
\begin{equation}\label{semialgebraicbasicformcompact}
X=\{x\in \rr^n:g_1(x) \ge 0,\ldots,g_r(x)\ge 0\},
\end{equation}
where $g_1,\ldots,g_r\in \rr[x]$. Choose  $R>0$ large enough so that   $g_0(x)=R^2-|x|^2$ is nonnegative  polynomial on $X$.
We now define a cone
\begin{equation*}
\KS(g_0,\ldots,g_r):=\Big\{\sigma_0+\sigma_1g_0+\sum_{i=1}^r\vf(g_i)g_i: \\ 
\sigma_0,\sigma_1\in\sum\rr[x]^2,\, \vf\in\sum\rr[t]^2\Big\}.
\end{equation*}

By the argument in the proof of Theorem \ref{approximation1} we obtain 

\begin{cor}\label{approximationPutinar}
If $f\in\rr[x]$ is strictly positive on the set $X$, then $f\in \KS(g_0,\ldots,g_r)$.
\end{cor}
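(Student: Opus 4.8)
The plan is to reduce Corollary \ref{approximationPutinar} to the key intermediate step already established inside the proof of Theorem \ref{approximation1}, namely Lemma \ref{interpolation1}, and then to invoke Schm\"udgen's (or Putinar's) theorem on the single ball constraint $g_0(x)=R^2-|x|^2\ge 0$. First I would fix $R>0$ so large that $X\subset\{x\in\rr^n:|x|<R\}$, hence in particular $g_0$ is strictly positive on $X$ and $X\subset[-R,R]^n$ (after enlarging $R$ if necessary so that the cube contains the ball of radius $R$, or simply running the cube argument with side $R$ directly as in Theorem \ref{approximation1}). Applying the construction in the proof of Theorem \ref{approximation1} to this fixed $R$ produces a single polynomial $\vf_R\in\sum\rr[t]^2$ such that the polynomial
\[
f_1(x):=f(x)-\sum_{i=1}^r\vf_R(g_i(x))\,g_i(x)
\]
is strictly positive on the compact cube $[-R,R]^n$, by Lemma \ref{interpolation1}. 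Note that $\sum_{i=1}^r\vf_R(g_i)g_i\in\Phi(g_1,\ldots,g_r)$, so it suffices to show $f_1\in T(g_0)=\sum\rr[x]^2+g_0\sum\rr[x]^2$, since then $f=f_1+\sum_{i=1}^r\vf_R(g_i)g_i\in\KS(g_0,g_1,\ldots,g_r)$.

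For the second step I would observe that the semialgebraic set $\{x:g_0(x)\ge0\}$, i.e. the closed ball $\overline{B}(0,R)$, is compact and is contained in $[-R,R]^n$ only if one first makes $R$ fit; more cleanly, I would instead run the cube argument of Theorem \ref{approximation1} and then separately note that $f_1$ is strictly positive on $\overline{B}(0,R)$ because $\overline{B}(0,R)\subset[-R,R]^n$ (the ball of radius $R$ about the origin sits inside the cube of half-side $R$). Hence $f_1$ is strictly positive on the compact basic semialgebraic set defined by the single polynomial $g_0$. Schm\"udgen's Positivstellensatz, quoted in the introduction, then applies to this one-constraint description: since the ball is compact, $f_1$ belongs to the preordering generated by $g_0$, which for a single polynomial is exactly $T(g_0)=\sigma_0+\sigma_1g_0$ with $\sigma_0,\sigma_1\in\sum\rr[x]^2$. (Alternatively Putinar's theorem applies directly, as the quadratic module of $g_0$ is archimedean: $R^2-|x|^2=g_0$ itself is in the module.) Combining, $f=\sigma_0+\sigma_1g_0+\sum_{i=1}^r\vf_R(g_i)g_i\in\KS(g_0,g_1,\ldots,g_r)$, which is the claim.

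The one point requiring a little care, and which I regard as the main (minor) obstacle, is the interplay between the cube $[-R,R]^n$ used in Theorem \ref{approximation1} and the ball $\overline{B}(0,R)$ appearing in the definition of $g_0$: one must be sure that the polynomial $f_1$ constructed to be positive on the cube is in particular positive on the ball, which is immediate from $\overline{B}(0,R)\subset[-R,R]^n$, and conversely that the $R$ chosen is large enough that $X$ lies inside both. Choosing $R$ large enough at the outset that $X\subset\overline{B}(0,R)\subset[-R,R]^n$ dispenses with this entirely. Everything else is a direct citation: Lemma \ref{interpolation1} gives strict positivity of $f_1$ on the cube, and Schm\"udgen's theorem (applicable because $\overline{B}(0,R)$ is compact) gives the representation $f_1\in T(g_0)$. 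No new estimates are needed beyond those already in the proof of Theorem \ref{approximation1}.
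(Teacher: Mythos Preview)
Your proposal is correct and follows essentially the same two-step argument as the paper: first invoke Lemma~\ref{interpolation1} to obtain $\vf\in\sum\rr[t]^2$ with $f-\sum_{i=1}^r\vf(g_i)g_i>0$ on $[-R,R]^n\supset\overline{B}(0,R)$, then apply Schm\"udgen/Putinar on the single constraint $g_0$ to place this remainder in $T(g_0)$. The paper's proof is simply terser about the cube/ball containment you spell out (and note that $\overline{B}(0,R)\subset[-R,R]^n$ always holds, so no enlargement of $R$ is ever needed there).
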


\begin{proof} By Lemma \ref{interpolation1}, there exists $\varphi\in\sum\rr[t]^2$ such that 
$$h(x)=f(x)-\sum_{i=1}^r \varphi(g_i(x))g_i(x)>0$$
 for $|x|\le R$. Since $\{x\in\rr^n:|x|\le R\} =\{x\in \rr^n:g_0(x)\ge0\}$, 
  Putinar's Positivstellensatz  
  (or  Schm\"udgen's Positivstellensatz, 
   because $P(g_0)=T(g_0)$)  yields
$$
h\in\left\{\sigma_0+\sigma_1g_0:\sigma_0,\sigma_1\in\sum\rr[x]^2\right\} = \KS(g_0),
$$
which completes the proof.
\end{proof}

Corollary \ref{approximationPutinar} also   follows from Schweighofer's result \cite[Lemma 2.3]{Schweighofer2} and the Putinar theorem.


\begin{remark}\label{KSoptimization}
We may use the Lasserre algorithm for minimization of a polynomial $f$ on a compact basic semialgebraic set $X$ by using ${\KS}(g,g_1,\ldots,g_r)$ instead of $P(g,g_1,\ldots,g_r)$. In fact, we can use the set ${\KS}_k(g,g_1,\ldots,g_r)$ consisting of all $\sigma_0+\sigma_1g+\sum_{i=1}^r\vf(g_i)g_i\in {\KS}(g,g_1,\ldots,g_r)$ such that $\deg \sigma_0\le k$, $\deg \sigma_1g \le k$ and $\deg \varphi(g_i)g_i\le k$ for $i=1,\ldots,r$. Consider the following optimization problems:

\begin{itemize}
\item  maximize  $a\in\rr$ such that $f-a\in {\KS}_k(g,g_1,\ldots,g_r)$,
\item minimize  $L(f)$ for $L:\rr[x]_k\to\rr$, linear, $L(1) = 1$, 
$ L({\KS}_k({{}g,}g_1,\ldots,g_r)) \subset [0,\infty)$.
\end{itemize}

\noindent
Denote
\[
\begin{split}
u_k^*&:=\sup\{a\in\rr:f-a\in {\KS}_k({{}g,}g_1,\ldots,g_r)\},\\
v_k^*&:=\inf\{L(f):L:\rr[x]_k\to\rr\hbox{ is linear, }\, L(1) = 1,\, L({\KS}_k({{}g,}g_1,\ldots,g_r)) \subset [0,\infty)\},
\end{split}
\]
for sufficiently large  $k\in\nn$.
We see that $(u_k^*)$, $(v_k^*)$ are increasing sequences that converge to $f^*$ (by Corollary \ref{approximationPutinar}) and  $u_k^*\le v_k^*\le f^*$ for $k\in\nn$.\hfill$\square$
\end{remark}

\subsection{Quantitative {\L}ojasiewicz inequality}\label{Lquantitative}
Let $g_1,\ldots,g_r\in \rr[x]$, and let $G:\rr^n\to\rr$ be  defined by
\begin{equation}\label{eqdefh}
G(x)=\max\{0,-g_1(x),\ldots,-g_r(x)\},\quad x\in\rr^n.
\end{equation}
Then 
$
X=\{x\in \rr^n:g_1(x) \ge 0,\ldots,g_r(x)\ge 0\}=G^{-1}(0).
$
Moreover, 
$$
\graph G=Y_0\cup Y_1\cup\cdots\cup Y_r,
$$
where
$
Y_0=X\times\{0\},
$
$$
Y_i=\{(x,y)\in\rr^n\times \rr:y=-g_i(x),\,g_i(x)\le 0,\,g_i(x)\le g_j(x)\;\hbox{for }j\ne i\},
$$
for $i=1,\ldots,r$. Note that each  set $Y_i$, $i=0,\ldots,r$, is defined by $r$ inequalities and one equation. Let $d=\max\{\deg g_1,\ldots,\deg g_r\}$. We now state the well-known \emph{{\L}ojasiewicz inequality} in a quantitative version proved in \cite[Corollary 2.3]{KSS} (see also \cite[Corollary 10]{KS}): there exist $C,\mathcal{L}>0$ such that 
\begin{equation}\label{lojin}
G(x)\ge C\left(\frac{\dist(x,X)}{1+|x|^d}\right)^{\mathcal{L}},\quad x\in \rr^n,
\end{equation}
with
\begin{equation}\label{estimateh}
\mathcal{L}\le d(6d-3)^{n+r-1}.
\end{equation}

It follows from \eqref{lojin} that  for every 
$\rho>0$  there exists $C_{\rho}>0$ such that
\begin{equation}\label{eqKS}
G(x)\ge C_{\rho} \dist(x,X)^{\mathcal{L}} \quad \text{for any }x\in B(\rho),
\end{equation}
where $B(\rho)=\{x\in\rr^n:|x|\le \rho\}$. 
Fix $R>0$ such that  $X\subset B(R)$. Assume that \eqref{eqKS} holds with  fixed $C^\prime=C_R$ and $\mathcal{L}$.

\begin{fact}\label{fact2}
Let $\eta>0$. Set $\delta_0=C'\eta^{\mathcal{L}}$. Then for any $0<\delta\le \delta_0$, 
$$
\{x\in B(R): g_i(x)\ge-\delta \hbox{ for } i={{}1},\ldots,r\}\subset \{x\in B(R):\dist(x,X)\le \eta\}.
$$
\end{fact}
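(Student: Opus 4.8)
The plan is to simply unwind the definition of the auxiliary function $G$ from \eqref{eqdefh} and feed the resulting bound into the quantitative {\L}ojasiewicz inequality \eqref{eqKS}. First I would fix $\eta>0$, set $\delta_0=C'\eta^{\mathcal{L}}$, pick $\delta$ with $0<\delta\leq\delta_0$, and take an arbitrary point $x\in B(R)$ satisfying $g_i(x)\geq-\delta$ for $i=1,\ldots,r$. The hypothesis says precisely that $-g_i(x)\leq\delta$ for every $i$, and trivially $0\leq\delta$, so from $G(x)=\max\{0,-g_1(x),\ldots,-g_r(x)\}$ we obtain $G(x)\leq\delta\leq\delta_0=C'\eta^{\mathcal{L}}$.

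Next, since $x\in B(R)$ and we have arranged that \eqref{eqKS} holds on $B(R)$ with the constants $C'$ and $\mathcal{L}$, we also have $G(x)\geq C'\dist(x,X)^{\mathcal{L}}$. Combining the two inequalities yields $C'\dist(x,X)^{\mathcal{L}}\leq C'\eta^{\mathcal{L}}$, and, since $C'>0$ and $\mathcal{L}>0$, this forces $\dist(x,X)\leq\eta$. Hence $x$ belongs to $\{x\in B(R):\dist(x,X)\leq\eta\}$, which is exactly the asserted inclusion.

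There is no real obstacle here: all the substance is already contained in the {\L}ojasiewicz estimate \eqref{lojin}--\eqref{eqKS}, and Fact \ref{fact2} is just the reformulation of it that will be used to produce an explicit lower bound for the $\delta$ occurring in \eqref{estimationg1} in the proof of Theorem \ref{approximation1}. The only point requiring attention is bookkeeping of constants: one must use $C'=C_R$ attached to the fixed ball $B(R)\supset X$, so that \eqref{eqKS} is valid at every point of the set on the left-hand side of the claimed inclusion — which is precisely the normalization fixed just before the statement.
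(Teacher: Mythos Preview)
Your proof is correct and follows essentially the same route as the paper: bound $G(x)$ above by $\delta$ using the definition of $G$, bound it below by $C'\dist(x,X)^{\mathcal{L}}$ using \eqref{eqKS}, and compare. The only cosmetic difference is that the paper restricts to $x\in B(R)\setminus X$ (where $G(x)=\max\{-g_1(x),\ldots,-g_r(x)\}$), whereas your argument handles all $x\in B(R)$ uniformly by keeping the $0$ in the maximum.
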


Indeed, take $x\in B(R) \setminus X$ such that $g_i(x)\ge-\delta$ for $i=0,\ldots,r$. Let $G$ be the function defined by \eqref{eqdefh}. Hence by \eqref{eqKS}, 
$$
\delta\ge \max\{-g_1(x),\ldots,-g_r(x)\}= G(x)\ge C'\dist (x,X)^{\mathcal{L}}.
$$  
Thus for $0<\delta\le \delta_0$ we deduce the assertion of Fact \ref{fact2}.\hfill$\square$
  
\subsection{Approximation}

For $\nu=(\nu_1,\ldots,\nu_n)\in \nn^n$ we set $|\nu|=\nu_1+\cdots+\nu_n$ and $a^\nu=a_1^{\nu_1}\cdots a_n^{\nu_n}$, where $a=(a_1,\ldots,a_n)\in\rr^n$. For  $h\in\rr[x]$ of the form
\begin{equation*}\label{formoffmultidimensional}
h(x)= \sum_{j=0}^d\sum_{|\nu| =j}a_{\nu}x^{\nu},
\end{equation*}
we define
\begin{equation*}
\mathbb{A}(h,R)=\sum_{j={{}0} 
}^d \sum_{|\nu| =j}|a_{\nu}| R^{j},\qquad
\mathbb{B}(h,R)=
\sum_{j=1}^d \sum_{|\nu| =j}j|a_{\nu}| R^{j-1}\quad \hbox{for }R> 0.
\end{equation*}

Then for $x\in B(R)$ we have $|h(x)|\le \mathbb{A}({{}h},R)$ and by the Euler formula for  homogeneous functions, $|\grad h(x)|\le \mathbb{B}({{}h},R)$.

Using a similar argument to the one for Theorem \ref{approximation1} we obtain the following

\begin{prop}\label{concave2}
Let $f\in\rr[x]$, let $X$ be a semialgebraic set of the form \eqref{semialgebraicbasicform} such that $X\subset B(R)$, $R>0$, and let $g_1,\ldots,g_r\in\rr[x]$ be polynomials satisfying \eqref{eqKS} with fixed $C,\mathcal{L}>0$. Take $M,A\in \rr$ such that
$$
M\ge \max\{1,\mathbb{A}(f,R),\mathbb{B}(f,R)\},\quad
A\ge \max\{1,\mathbb{A}(g_i,R)\}
\quad\hbox{for }i=1,\ldots,r.
$$
Take $\epsilon>0$, and set
$$
\vf(t)=\left(\frac{1}{A}t-1+\frac{\delta}{2A}\right)^{2N},
$$
where 
$$
0<\delta\le \min\left\{A,C\left(\frac{\epsilon}{2M}\right)^{\mathcal{L}}\right\},\ N\ge \max\left\{\frac{(r-1)A-1}{2},\frac{A(2M+1-\delta)}{\delta^2},
\frac{2rA-\epsilon}{2\epsilon}\right\}.
$$ 
Then the function 
$$
h(x)=\sum_{i={{}1}}^r \varphi(g_i(x))g_i(x)\in \Phi(g_1,\ldots,g_r)
$$ 
satisfies the following conditions:
\begin{gather}\label{eqconcave2}
0\le h(x)<\epsilon\quad\hbox{for }x\in X,
\\
\label{eqconcave3}
\forall_{|y|\le R}\ \exists_{x\in X}\ f(y)-h(y)\ge f(x)-h(x)-\epsilon.
\end{gather}
\end{prop}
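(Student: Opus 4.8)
The plan is to follow the same three-region analysis used in the proof of Lemma~\ref{interpolation1}, but now tracking the quantitative dependence on $\epsilon$, $M$, $A$, $\delta$ and $N$, and to use the quantitative {\L}ojasiewicz inequality (Fact~\ref{fact2}) to get the explicit lower bound for $\delta$. First I would record the elementary bounds on the auxiliary polynomial $\varphi$. Writing $u(t)=\tfrac1A t-1+\tfrac{\delta}{2A}$, so $\varphi(t)=u(t)^{2N}$, I note that for $t\in[0,A]$ we have $u(t)\in[-1+\tfrac{\delta}{2A},\,\tfrac{\delta}{2A}]$, hence $|u(t)|\le 1$ (using $\delta\le A$, actually $\delta\le A$ gives $\tfrac{\delta}{2A}\le\tfrac12$), so $0\le\varphi(t)\le 1$ there; more precisely $\varphi(t)\le(\tfrac{\delta}{2A})^{2N}$ when $t\ge 0$ is small, but the crude bound $\varphi(t)\le 1$ on $[0,A]$ is what is needed. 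For $t\le-\delta$ (and $t\ge -A$), $u(t)=\tfrac1A t-1+\tfrac{\delta}{2A}\le -1-\tfrac{\delta}{2A}$, so $|u(t)|\ge 1+\tfrac{\delta}{2A}\ge e^{\delta/(4A)}$ for small $\delta/A$, whence $\varphi(t)\ge(1+\tfrac{\delta}{2A})^{2N}$, and the condition $N\ge A(2M+1-\delta)/\delta^2$ is designed precisely so that $(1+\tfrac{\delta}{2A})^{2N}\cdot(\text{something})\ge$ the required size; I would verify the exact inequality $(1+\tfrac{\delta}{2A})^{2N}\ge \tfrac{2M+r\epsilon}{\delta}$ or similar by taking logs and using $\log(1+s)\ge s-\tfrac{s^2}{2}\ge \tfrac s2$ for $0<s\le 1$.

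Next I would establish \eqref{eqconcave2}. For $x\in X$ we have $g_i(x)\ge 0$, and also $g_i(x)\le\mathbb{A}(g_i,R)\le A$ since $x\in B(R)$, so $g_i(x)\in[0,A]$; thus $0\le\varphi(g_i(x))\le 1$ and $0\le\varphi(g_i(x))g_i(x)\le g_i(x)$. That alone does not give $<\epsilon$; I need the stronger estimate that $\varphi(t)t<\epsilon/r$ for $t\in[0,A]$. The maximum of $t\mapsto u(t)^{2N}t$ on $[0,A]$: on this interval $u(t)\le \tfrac{\delta}{2A}\le\tfrac12$ and $u(t)$ could be as negative as $-1+\tfrac{\delta}{2A}$, so $u(t)^{2N}$ is largest near $t=0$ where it equals $(\tfrac{\delta}{2A}-1)^{2N}$ but is multiplied by $t=0$; a calibration calculation (this is where the bound $N\ge (2rA-\epsilon)/(2\epsilon)$ enters) shows $\max_{[0,A]}u(t)^{2N}t\le\epsilon/r$. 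I would carry this out by noting $u(t)^{2N}t\le t$ on the part where $|u(t)|\le1$, which is $t\in[\text{something},A]$, and on the complementary small interval near $0$ bound $t$ directly; summing over $i=1,\dots,r$ gives $0\le h(x)<\epsilon$ on $X$.

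Then I would prove \eqref{eqconcave3} by the three-region argument of Lemma~\ref{interpolation1}, with $f^*_R$ replaced throughout by explicit quantities built from $M$ and $\epsilon$. Put $\eta=\epsilon/(2M)$; by the hypothesis $\delta\le C(\epsilon/(2M))^{\mathcal L}$ and Fact~\ref{fact2}, the set $\{x\in B(R): g_i(x)\ge-\delta\ \forall i\}$ is contained in $\{x\in B(R):\dist(x,X)\le\eta\}$. Now fix $y\in B(R)$ and consider three cases. If $g_i(y)\ge-\delta$ for all $i$, then $\dist(y,X)\le\eta$, so there is $x\in X$ with $|y-x|\le\eta$; since $|\nabla f|\le\mathbb{B}(f,R)\le M$ on $B(R)$ we get $f(y)\ge f(x)-M\eta=f(x)-\epsilon/2$, and combining with $0\le h(y)<\epsilon$ (the same $[0,A]$ bound applies whenever every $g_i(y)\ge 0$, and for the negative $g_i$'s with $g_i(y)\ge-\delta$ one has $\varphi(g_i(y))g_i(y)<0$, so $h(y)<\epsilon$ still, possibly negative) and $h(x)\ge0$ yields $f(y)-h(y)\ge f(x)-h(x)-\epsilon/2-\epsilon$; absorbing constants (or starting with $\eta=\epsilon/(4M)$) gives the claim. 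If instead some $g_i(y)<-\delta$, then for every such index $\varphi(g_i(y))g_i(y)\le -\delta\,(1+\tfrac{\delta}{2A})^{2N}$, and by the log-estimate and $N\ge A(2M+1-\delta)/\delta^2$ this is $\le -(2M+1)$ (say); meanwhile the indices with $g_i(y)\ge0$ contribute at most $r\cdot\epsilon/r=\epsilon$ total and the indices with $-\delta\le g_i(y)<0$ contribute negatively. Hence $h(y)\le -(2M+1)+\epsilon$, so $f(y)-h(y)\ge f(y)+2M+1-\epsilon\ge -M+2M+1-\epsilon\ge f(x)-h(x)-\epsilon$ for any $x\in X$, using $|f|\le M$ and $0\le h\le\epsilon$ on $X$. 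I expect the main obstacle to be bookkeeping: choosing the intermediate constant in $\eta$ (e.g.\ $\epsilon/(2M)$ vs.\ $\epsilon/(4M)$) and the exact thresholds in the $\max$ defining $N$ so that all three cases close with the stated $\epsilon$ rather than a larger multiple, and checking the logarithmic inequality $2N\log(1+\tfrac{\delta}{2A})\ge \log\tfrac{2M+r\epsilon}{\delta}$ is implied by $N\ge A(2M+1-\delta)/\delta^2$ over the admissible range of $\delta$; none of this is deep, but it must be done carefully to land exactly on \eqref{eqconcave2}--\eqref{eqconcave3}.
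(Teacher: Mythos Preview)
Your approach is essentially the paper's: introduce $\phi(t):=t\varphi(t)$, establish a pointwise upper bound on $[0,A]$ and a pointwise lower bound for $t\le-\delta$, set $\eta=\epsilon/(2M)$, invoke Fact~\ref{fact2}, and run the three-region analysis of Lemma~\ref{interpolation1}. So the strategy is correct.

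What is missing is the \emph{sharp} form of the two elementary bounds on $\phi$, and without them your bookkeeping does not close with the stated constants. The paper records, for $0<\delta\le A$:
\[
\phi(t)<\frac{A}{2N+1}\quad (t\in[0,A]),\qquad
\phi(t)\le -2M-\frac{(r-1)A}{2N+1}\quad (t\le-\delta).
\]
The first is a one-variable maximum: on $[0,A]$ the function $t\mapsto t\bigl(1-\tfrac{\delta}{2A}-\tfrac{t}{A}\bigr)^{2N}$ attains its maximum at $t=\tfrac{A-\delta/2}{2N+1}$ and the value there is strictly below $\tfrac{A}{2N+1}$. Combined with $N\ge\frac{2rA-\epsilon}{2\epsilon}$, i.e.\ $\frac{rA}{2N+1}\le\frac{\epsilon}{2}$, this gives $\sum_{g_i(y)\ge 0}\phi(g_i(y))<\epsilon/2$ (not merely $<\epsilon$). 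The second follows from Bernoulli, $(1+\tfrac{\delta}{2A})^{2N}\ge 1+\tfrac{N\delta}{A}$, together with $N\ge\frac{A(2M+1-\delta)}{\delta^2}$ and $N\ge\frac{(r-1)A-1}{2}$; no logarithmic estimate is needed.

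With the $\epsilon/2$ (rather than $\epsilon$) bound on the positive part of $h$, the case $y\in Y_2\setminus X$ closes exactly: $f(y)\ge f(x)-\epsilon/2$ and $h(y)\le\epsilon/2$ give $f(y)-h(y)\ge f(x)-\epsilon\ge f(x)-h(x)-\epsilon$, since $h(x)\ge 0$. Your proposed fix of ``absorbing constants (or starting with $\eta=\epsilon/(4M)$)'' would alter the hypothesis $\delta\le C(\epsilon/(2M))^{\mathcal L}$ and hence not prove the proposition as stated. Likewise, in the third region one gets $h(y)\le -2M$ directly from the second displayed bound (the $(r-1)A/(2N+1)$ term absorbs the $\le r-1$ positive contributions), so $f(y)-h(y)\ge -M+2M\ge f(x)\ge f(x)-h(x)-\epsilon$. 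Once you replace your sketched ``calibration calculation'' and log estimate by these two clean inequalities, the argument is exactly the paper's and lands on the stated $\epsilon$.
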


\begin{proof}
It is easy to see that for the function 
$$
\phi(t)=t\left(\frac{1}{A}t-1+\frac{\delta}{2A}\right)^{2N},
$$
where $0<\delta<A$, $N>\frac{(r-1)A-1}{2}$, 
$
N>\frac{A(2M+1-\delta)}{\delta^2},
$ 
we have
\begin{alignat}{2}\label{psiepsilon1}
\phi(t)&<\frac{A}{2N+1}& &\hbox{for }t\in[0,A],
\\
\label{psiepsilon2}
\phi(t)&\le -2M-\frac{(r-1)A}{2N+1}&\quad&\hbox{for }t\le -\delta.
\end{alignat}

From the assumptions of $M$ and $A$ we have $|f(x)|\le M$, $|\grad f(x)|\le M$ and $|g_i(x)|\le A$ for  
 $i=1,\ldots,r$ and $x\in\rr^n$ such that $|x|\le R$.

Take any $\epsilon>0$. Let 
\begin{align*}
Y &:=\{y\in\rr^n:|y|\le R\,\land\,\exists_{x\in X}\ f(y)\ge f(x)-\epsilon/2\},\\
\eta &:=\frac{\epsilon}{2M},\\
Y_1 &:=\{y\in \rr^n:|y|\le R\,\land\,\dist (y,X)\le \eta\}.
\end{align*}
By the Mean Value Theorem, $Y_1\subset Y$. From Fact \ref{fact2}, for $0<\delta\le C\eta^{\mathcal{L}}$ we have 
$$
Y_2:=\{x\in \rr^n:|x|\le R\,\land\, g_i(x)\ge-\delta \hbox{ for } i=0,\ldots,r\}\subset Y_1\subset Y. 
$$

Obviously $h(x)\ge 0$ for $x\in X$.
Since $h(x)=\sum_{i=1}^r\phi(g_i(x))$ and $g_i(x)\in[0,A]$ for $x\in X$, by \eqref{psiepsilon1} and the assumption $N\ge \frac{2rA-\epsilon}{2\epsilon}\ge\frac{rA-\epsilon}{2\epsilon} $ we obtain \eqref{eqconcave2}. 

Now we prove \eqref{eqconcave3}. Obviously it holds for $y\in X$.

Take $y\in Y_2\setminus X$. Without loss of generality we may assume that 
$$
g_1(y),\ldots,g_k(y)\ge 0\quad\hbox{and}\quad g_{k+1}(y),\ldots,g_r(y)<0
$$
for some $0\le k<r$. Then there exists $x\in X$ such that $f(y)\ge f(x)-\frac{\epsilon}{2}$. So, \eqref{psiepsilon1} and the assumption  $N\ge \frac{2rA-\epsilon}{2\epsilon}$ give
$$
f(y)-h(y)\ge f(x)-\frac{\epsilon}{2}-h(y)\ge f(x)-\frac{\epsilon}{2}-\sum_{i=1}^k\phi(g_i(y))\ge f(x)-\epsilon\ge f(x)-h(x)-\epsilon.
$$
This proves \eqref{eqconcave3} for $y\in Y_2\setminus X$. 

Let now $y\in \{x\in \rr^n:|x|\le R,\, x\not\in Y_2\}$. Without loss of generality we may assume that 
$$
g_1(y),\ldots,g_k(y)\ge 0, \quad 0>g_{k+1}(y),\ldots,g_l(y) \ge -\delta,\quad  g_{l+1}(y),\ldots,g_r(y) < -\delta,
$$
where $0\le k\le l<r$. Then, by the choice of $M$, the assumption 
$N\ge \frac{A(2M+1-\delta)}{\delta^2}$ and \eqref{psiepsilon2} we see that $h(y)\le -2M$, and so for any $x\in X$ we have
$$
f(y)-h(y)\ge -M+2M\ge f(x)\ge f(x)-h(x)\ge f(x)-h(x)-\epsilon.
$$
This gives \eqref{eqconcave3} in the  case under consideration and ends the proof. 
\end{proof}

\begin{remark}\label{remmuconcave}
If we assume that $g_1,\ldots,g_r$ are $\mu$-strongly concave polynomials, i.e., 
\begin{equation*}\label{eqconcave1}
g_i(y)\le g_i(x)+\langle y-x,\grad g_i(x)\rangle -\frac{\mu}{2}|y-x|^2 \quad\hbox{for }x,y\in\rr^n,
\end{equation*}
where $\mu>0$ and $\langle\cdot\,,\cdot\rangle$ is the standard scalar product, then the assertion of Fact~\ref{fact2} holds with $\delta_0=\eta^2\mu/2$. Hence, Proposition \ref{concave2} holds with $0<\delta\le \min\big\{A,\frac{\epsilon^2\mu}{8M^2}\big\}$.
\end{remark}

\begin{remark}\label{Algorithm2}
We can use Proposition \ref{concave2} to minimize a polynomial $f$ on a compact basic semialgebraic set $X$. Let $X\subset \{x\in\rr^n:|x|\le R\}$. Then for any $\epsilon>0$, we can effectively compute a polynomial $h(x)=\sum_{i=i}^r \varphi(g_i(x))g_i(x)$, where  $\vf\in\sum\rr[t]^2$, such that 
$$
f^*-2\epsilon \le \inf \{f(y)-h(y):|y|\le R\} \le f^*+2\epsilon.
$$
To approximate $f^*$, we can minimize $f-h$ on $B(R)$. To this end we may compute 
$$
a_k^{**}:=\sup \{a\in\rr:f-h-a\in P_k(R^2-|y|^2)\}\quad\hbox{for }k\in\nn.
$$
By the Putinar Theorem (or the Schm\"udgen Theorem) we see that
$$
a_k^{**}\to f^{**}\quad\hbox{as }k\to\infty,
$$
where $f^{**}:= \inf \{f(y)-h(y):|y|\le R\}$.

Minimization of $f-h$ on $B(R)$ is much simpler than minimizing  $f$ on $X$, because the set $B(R)$ is described by one inequality $R^2-|x|^2\ge 0$. In this case M.~Schweighofer \cite{Schweighofer1} gave the rate of convergence of the sequence $a_k^{**}$: 
$$
f^{**}-a_k^{**}\le\frac{c}{\sqrt[d]{k}}
$$
for some constant $c\in\nn$ depending on $f$ and $R^2-|y|^2$ and some constant $d\in\nn$ depending on $R^2-|y|^2$.
\end{remark}

\section{Convex polynomials in one variable}\label{onevariableconvexsection}

 We denote  by  $ \nn^*$ the set of strictly positive integers.  In this section $x$ denotes a single variable. Let $f\in\rr[x]$ be a nonzero polynomial. For any $N\in \nn^*$  we define the following polynomial:
\begin{equation}\label{vfpolynomialone10}
\varphi_N(x):=(1+x^2)^Nf(x).
\end{equation}
We will find $N_0\in\nn^*$ such that for $N\ge N_0$ the polynomial $\vf_N$ is strongly convex on a closed interval $I\subset \rr$, provided $f$  is positive on $I$.

For positive numbers $m,R,D$ we set
\begin{equation}\label{formulanaN}
\mathcal{N}(m,R,D): = \max\left\{ \frac{D}{m}+\frac{m}{16D},\, \frac{(1+R^2)D}{Rm}+1,\frac{4D^2}{m^2}+2,\, \frac{(1+R^2)D}{2m}  \right\}.
\end{equation}

We first  prove that if  $f$ is  a 
$C^2$ function positive on a bounded interval $I$, then   $\varphi_N(x)=(1+x^2)^Nf(x) $ is convex for every  $N$ sufficiently large.  We formulate this lemma for $C^2$ functions because restricting to polynomials does not  simplify the proof considerably.

\begin{lemat}\label{convexonevariableoncompact}
Let   $f$ be a $C^2$ function 
positive on an interval  
  $I=[a,b]\subset \rr$,  and let $R\ge \max\{|a|,|b|\}$. If $m,D>0$ satisfy the conditions  
\begin{equation}\label{estimatef0}
m\le \min\{f(x):x\in I\},
\end{equation}
\begin{equation}\label{eqD10}
|f'(x)|\le D ,\quad |f''(x)|\le D \quad \hbox{for } |x|\le R,
\end{equation}
then for any 
 $N\in\nn$ satisfying
\begin{equation}\label{uniformestimates1}
N> \mathcal{N}(m,R,D)
\end{equation}
we have $\vf_N''(x)>0$ for $x\in I$, thus $\vf_N(x)$ is strongly
 convex on $I$.
\end{lemat}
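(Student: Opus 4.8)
The strategy is to compute $\varphi_N''$ explicitly via the Leibniz rule applied to the product $(1+x^2)^N \cdot f(x)$, then show that for $N$ large the dominant contribution is a large positive multiple of $(1+x^2)^{N-2}$ coming from differentiating the power $(1+x^2)^N$ twice, while all the remaining terms are controlled using the bounds $m,D$ on $f,f',f''$. Write $u(x)=(1+x^2)^N$, so $\varphi_N = u f$ and
\[
\varphi_N'' = u'' f + 2 u' f' + u f''.
\]
Now $u'(x) = 2Nx(1+x^2)^{N-1}$ and
\[
u''(x) = 2N(1+x^2)^{N-1} + 4N(N-1)x^2(1+x^2)^{N-2} = 2N(1+x^2)^{N-2}\bigl((2N-1)x^2 + 1\bigr).
\]
Factoring out $(1+x^2)^{N-2}$ from every term, it suffices to prove that for $x\in I$,
\[
2N\bigl((2N-1)x^2+1\bigr)f(x) + 4Nx(1+x^2)f'(x) + (1+x^2)^2 f''(x) > 0.
\]

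**Main estimates.**
Using $f(x)\ge m$, $|f'(x)|\le D$, $|f''(x)|\le D$ on $I\subset[-R,R]$, the left side is bounded below by
\[
2Nm\bigl((2N-1)x^2+1\bigr) - 4N|x|(1+x^2)D - (1+x^2)^2 D.
\]
The first term is at least $2Nm$ (from the $+1$) and, separately, at least $2Nm(2N-1)x^2$. The idea is to split into the regime where $|x|$ is small and the regime where $|x|$ is not small, absorbing the negative terms differently. For the $|x|$-linear term one uses $4N|x|(1+x^2)D \le \tfrac12\cdot 2N m(2N-1)x^2 + (\text{something}/N)$ type AM--GM splitting — more precisely $4N|x|(1+x^2)D$ is dominated by combining a fraction of $2Nm(2N-1)x^2$ with a fraction of $2Nm$, which is exactly where a lower bound on $N$ of the form $N \gtrsim (1+R^2)D/(Rm)$ and $N\gtrsim D/m + m/D$ enters. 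The term $(1+x^2)^2 D \le (1+R^2)^2 D$ is a constant, killed by requiring $2Nm \ge 2(1+R^2)^2 D$, i.e. $N\gtrsim (1+R^2)D/m$; cross-checking with \eqref{formulanaN}, the four quantities in $\mathcal N(m,R,D)$ are precisely tuned so that each of these competitions is won. So the proof reduces to verifying that each of the four terms in the max in \eqref{formulanaN}, when $N$ exceeds it, defeats one piece of the negative contribution; this is the routine-calculation core.

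**The main obstacle.**
The real work — and the part I expect to be delicate — is the bookkeeping in the small-$|x|$ versus large-$|x|$ split: near $x=0$ the $x^2$-coefficient $2Nm(2N-1)$ gives no help, so one must beat $4N|x|(1+x^2)D + (1+R^2)^2D$ using only the constant term $2Nm$ plus whatever fraction of the quadratic term is available at that particular $x$; conversely for $|x|$ bounded away from $0$ the quadratic term dominates everything once $N$ is large. Getting the thresholds sharp enough to match exactly the four expressions in $\mathcal N(m,R,D)$, rather than some cruder bound, is where care is needed. Once $\varphi_N''>0$ on $I$ is established, strong convexity on $I$ is immediate (a $C^2$ function with strictly positive second derivative on an interval is strongly convex there, with modulus $\min_I \varphi_N''>0$ by compactness), completing the proof.
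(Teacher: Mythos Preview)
Your approach is essentially identical to the paper's: the same Leibniz expansion, the same factorization by $(1+x^2)^{N-2}$, and the same small-$|x|$ versus large-$|x|$ split to match the four terms in $\mathcal{N}(m,R,D)$. The paper makes the threshold explicit as $|x|=m/(4D)$ and, writing $A_N=4N(N-1)x^2f$, $B_N=2N(1+x^2)f$, $Q_N=4N(1+x^2)xf'$, $T_N=(1+x^2)^2f''$, pairs $\tfrac12 B_N+Q_N>0$ and $\tfrac12 B_N+T_N>0$ for $|x|<m/(4D)$ (using $N\ge D/m+m/(16D)$), and $A_N+Q_N>0$, $B_N+T_N>0$ for $m/(4D)\le|x|\le R$ (using the remaining three bounds); this is exactly the ``routine-calculation core'' you anticipated.
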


\begin{proof} 
Denote $P_N=A_N+B_N+Q_N+T_N$, where
\begin{align*}
A_N(x)&=4N(N-1)x^2f(x),\qquad 
B_N(x)=2N(1+x^2)f(x),\\
Q_N(x)&=4N(1+x^2)xf'(x),\qquad
T_N(x)=(1+x^2)^2f''(x).
\end{align*}
Then 
\begin{equation}\label{eqvfN}
\vf_N''(x)=(1+x^2)^{N-2}P_N(x).
\end{equation}

Let 
$N\in \nn$ satisfy \eqref{uniformestimates1}. To  prove that $\vf_N$ is convex on $I$ 
 we will proceed in several steps.
From \eqref{estimatef0} and \eqref{eqD10} we obtain
\begin{alignat}{2}\label{eq30}
A_N(x)& \ge 4N(N-1)x^2m & &\hbox{for } x\in I,
\\
\label{eq40}
B_N(x) &\ge 2N(1+x^2)m &  &\hbox{for } x\in I,
\\
\label{eq50}
Q_N(x) &\ge -4N(1+x^2)|x|D &\quad &\hbox{for } |x|\le R,
\\
\label{eq60}
T_N(x) &\ge -(1+x^2)^2D & &\hbox{for } |x|\le R.
\end{alignat}

Since $N$ satisfy \eqref{uniformestimates1}, we have 
\begin{equation}\label{estimateN0}
N\ge \frac{D}{m}+\frac{m}{16D}.
\end{equation}
Note that then
\begin{equation}\label{estimation10}
\frac{m}{4D}\le\sqrt{\frac{Nm-D}{D}} .
\end{equation}
Assume now  that $x\in I$, $|x|<\frac{m}{4D}$. Then obviously $A_N(x)\ge 0$. By \eqref{eq40} and \eqref{eq50} 
we have
\begin{equation*}\label{eq80}
\frac{1}{2}B_N(x)+Q_N(x)>0.
\end{equation*}
Also  by \eqref{eq40},  \eqref{eq60} and \eqref {estimation10}, 
\begin{equation*}\label{eq70}
\frac{1}{2}B_N(x)+T_N(x)>0.
\end{equation*}
So for $N$ satisfying \eqref{estimateN0} we have $P_N(x)>0$, and consequently by \eqref{eqvfN},
\begin{equation}\label{eq90}
\vf_N''(x)>0\quad\hbox{for } x\in I,\,|x|< \frac{m}{4D}.
\end{equation}

We have to show now that $P_N(x)>0$ for $x\in I$, $\frac{m}{4D}\le |x|\le R$. 
By \eqref{uniformestimates1} we have
\begin{equation}\label{eqN0}
N >\max\left\{\frac{(1+R^2)D}{Rm}+1,\frac{4D^2}{m^2}+2\right\}.
\end{equation}
By \eqref{eq30} and \eqref{eq50} we see that
\begin{equation}\label{estimation911}
A_N(x)+Q_N(x)\ge (-D|x|^2+(N-1)m|x|-D)4N|x| \quad\hbox{for } x\in I,\,|x|\le R,
\end{equation}
and by \eqref{eqN0},
$$
-D\left(\frac{m}{4D}\right)^2+(N-1)m \frac{m}{4D}-D>0
$$
and
$$
-DR^2+(N-1)mR-D>0.
$$
Hence $-D|x|^2+(N-1)m|x|-D>0$ for $\frac{m}{4D}\le |x|\le R$, and \eqref{estimation911} gives
\begin{equation}\label{eq1000}
A_N(x)+Q_N(x)>0\quad \hbox{for } x\in I,\,\frac{m}{4D}\le |x|\le R.
\end{equation}
By \eqref{uniformestimates1} 
we have
\begin{equation*}\label{estimateN20}
N>\frac{(1+R^2)D}{2m};
\end{equation*}
then, by \eqref{eq40} and \eqref{eq60}, we obtain
\begin{equation}\label{eq110}
B_N(x)+T_N(x)>0\quad \hbox{for } x\in I,\,\frac{m}{4D}\le |x|\le R.
\end{equation}
Consequently, by \eqref{eq1000}, \eqref{eq110} and \eqref{eqvfN},
 we have  
\begin{equation}\label{eq901}
\vf_N''(x)>0\quad\hbox{for } x\in I,\,\frac{m}{4D}\le |x|\le R.
\end{equation}

Summing up, for $N$ satisfying \eqref{uniformestimates1}, by \eqref{eq90} and \eqref{eq901}, 
we have $\vf_N''(x)>0$, $x\in I$, which means that $\vf_N$ is strongly convex on $I$ and Lemma \ref{convexonevariableoncompact} is proved.\end{proof}

\begin{remark}\label{convexonevariableoncompactextra}
Lemma \ref{convexonevariableoncompact} was proved under the assumption that the function $f$ is~$C^2$.
If we assume that $f$ is a polynomial which is positive except possibly at $0\in\rr$, then an analogous argument leads to a strictly convex function $\vf_N$. More precisely, 
let $f\in\rr[x]$ be a polynomial positive on   
  $I=[a,b]$ except possibly at $0\in\rr$, where $0\in (a,b)$.
  Then there exists $N_0\in\nn$ such that for any $N\in\nn$ with $N\ge N_0$ the polynomial  $\vf_N(x)$ is  strictly 
 convex on $I$.
\end{remark}


For a  polynomial  of degree $d$  of the form
\begin{equation}\label{formf0}
f=\sum_{i=0}^d a_ix^{d-i},\quad a_0,\ldots,a_d\in\rr,\quad a_0\ne 0,
\end{equation}
and  $R>0$, we set
\begin{equation*}\label{estimationN20}
D(f,R):=\max\left\{1,\,\sum_{i=0}^{d-1}(d-i)|a_i|R^{d-i-1},\, \sum_{i=0}^{d-2}(d-i)(d-i-1)|a_i|R^{d-i-2}\right\}.
\end{equation*}
We easily see that  for any $D\ge D(f,R)$ the assumption \eqref{eqD10} of Lemma \ref{convexonevariableoncompact} holds. 
If $d>0$, we define
\begin{equation*}\label{estimateN10}
K(f)=1+2\max_{1\le i\le d}\left|\frac{a_i}{a_0}\right|^{1/i}.
\end{equation*}
Obviously $K(f)>0$. It is known that if $f(z)=0$, $z\in \cc$ then $|z| < K(f)$.  
Since for $d\ge 2$ the complex zeroes of $f'$ and $f''$ lie in the convex hull of the set of complex zeroes of $f$, 
\begin{equation}\label{sentenceonf}
\hbox{$f$, $f'$ and $f''$ have no zeroes $x\in\rr$ such that $|x| \ge K(f)$.} 
\end{equation}

We prove a version of  Lemma \ref{convexonevariableoncompact} for a polynomial on an arbitrary interval. (A~version of this lemma, without  explicit  bound for $N$, 
has been proven in the M.Sc. thesis of I.~Fau \cite{Fau}.)

\begin{lemat}\label{convexonevariableoncompact1}\label{convexonevariable}\label{maincorollary1}
Let $f\in\rr[x]$ be positive on a closed interval $I\subset \rr$. Let $m>0$ satisfy \eqref{estimatef0}, 
 and 
let $R\ge K(f)$ and $D\ge D(f,R)$ $($or let $D$ satisfy \eqref{eqD10}$)$. Then for any integer $N>\mathcal{N}(m,R,D)$
 the polynomial $\vf_N(x)$ is strongly
 convex on $I$.
\end{lemat}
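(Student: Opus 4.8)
The plan is to reduce the general (possibly unbounded) interval $I$ to the compact case settled in Lemma~\ref{convexonevariableoncompact}. Recall from \eqref{eqvfN} that $\vf_N''(x)=(1+x^2)^{N-2}P_N(x)$ with $P_N=A_N+B_N+Q_N+T_N$, so it suffices to show $P_N(x)>0$ for every $x\in I$. I would split
$I=\bigl(I\cap[-R,R]\bigr)\cup\bigl(I\cap\{|x|\ge R\}\bigr)$ and treat the two pieces separately, the threshold $N>\mathcal{N}(m,R,D)$ being the same for both.

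On $I\cap[-R,R]$, which is a (possibly empty) compact interval contained in $[-R,R]$, the function $f$ is positive, $m\le\min\{f(x):x\in I\cap[-R,R]\}$ since this is at least $\min\{f(x):x\in I\}\ge m$, and $|f'|,|f''|\le D$ on $[-R,R]$ by the choice of $D$ (either $D\ge D(f,R)$, or $D$ satisfies \eqref{eqD10} directly). Hence Lemma~\ref{convexonevariableoncompact}, applied with the very same $m,R,D$, gives $\vf_N''(x)>0$ on $I\cap[-R,R]$ as soon as $N>\mathcal{N}(m,R,D)$.

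The remaining, and only genuinely new, part is the ``tail'' $I\cap\{|x|\ge R\}$. Here I would exploit $R\ge K(f)$ together with \eqref{sentenceonf}: $f,f',f''$ have no real zero of modulus $\ge K(f)$, so each keeps a constant sign on $(K(f),\infty)$ and on $(-\infty,-K(f))$. If $I$ meets the ray $(R,\infty)$ at some point, then $f>0$ there forces $f>0$ on all of $(K(f),\infty)$, and comparing with the leading term $a_0x^d$ pins down $\sign a_0$, hence the signs of the leading terms $da_0x^{d-1}$ of $f'$ and $d(d-1)a_0x^{d-2}$ of $f''$, and therefore (no zeros being present) the signs of $f'$ and $f''$ on that ray; a symmetric bookkeeping handles the ray $(-\infty,-R)$, where one must track the parity of $d$ as well. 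In each case one concludes that for $x\in I$ with $|x|\ge R$ one has $f(x)>0$, $xf'(x)>0$ and $f''(x)\ge 0$ (with $T_N\equiv 0$ when $\deg f\le 1$, where the argument is elementary). Consequently $A_N(x)\ge 0$, $B_N(x)>0$, $Q_N(x)>0$, $T_N(x)\ge 0$, so $P_N(x)>0$, i.e. $\vf_N''(x)>0$.

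Combining the two pieces yields $\vf_N''>0$ on all of $I$, so $\vf_N$ is strongly convex on $I$. I expect the only delicate point to be the sign bookkeeping on the tail --- keeping straight the interplay between $\sign a_0$, the parity of $d$, and the two directions $x\to\pm\infty$ --- everything else being a direct appeal to Lemma~\ref{convexonevariableoncompact} and to \eqref{sentenceonf}.
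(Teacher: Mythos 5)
Your proof is correct, and the decomposition of $I$ into the compact middle $I\cap[-R,R]$ and the two tails $I\cap\{|x|\ge R\}$ is the same split the paper uses. The treatment of the tails, however, is genuinely different. The paper runs the Gauss--Lucas argument on $\vf_N$ itself: the complex zeros of $\vf_N=(1+x^2)^Nf$ are those of $f$ together with $\pm i$, all of modulus $<K(f)\le R$, so $\vf_N''$ has no real zero with $|x|\ge R$ and therefore keeps a constant sign on each ray $\{x\ge R\}$, $\{x\le -R\}$; the sign is then read off from $\vf_N''(\pm R)>0$, supplied by Lemma~\ref{convexonevariableoncompact}. You instead apply \eqref{sentenceonf} to $f$ alone, pin down the signs of $f$, $f'$, $f''$ on the rays from the leading coefficient, and then verify directly that on the tails each of the four summands $A_N,B_N,Q_N,T_N$ of $P_N$ is nonnegative with $B_N>0$, which gives $\vf_N''>0$ without invoking the compact lemma there at all. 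Both routes are sound; yours is more elementary and has the bonus of handling cleanly the corner case where $I$ lies entirely beyond $\pm R$ (so that $\pm R\notin I$ and the paper's appeal to $\vf_N''(\pm R)$ via Lemma~\ref{convexonevariableoncompact} does not literally apply), whereas the paper's is shorter. One small point worth making explicit in your write-up: the requirement $N\ge 2$ (so that $A_N=4N(N-1)x^2f\ge 0$ is informative) is automatic, since $\mathcal{N}(m,R,D)\ge \frac{4D^2}{m^2}+2\ge 2$.
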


\begin{proof} By the same argument as for \eqref{sentenceonf}, we deduce that  $\vf''_N(x)$ for $x\le -R$ 
has the same sign as $\vf''_N(-R)$. Analogously,  $\vf''_N(R)$ and  $\vf''_N(x)$ for $x\ge R$ have the same sign.
 Moreover, $\vf''_N(-R)\ne 0$ and $\vf''_N(R)\ne 0$. So considering the sign of $\vf''_N$ on the intervals  $J_1=I\cap [-R,R]$, $J_2=I\cap [R,+\infty)$ and $J_3=I\cap (-\infty,-R]$
  we deduce the assertion by Lemma \ref{convexonevariableoncompact}.  Note that the strong convexity of $\varphi_N$ is due to the fact  that $f$ is a polynomial. \end{proof}

 \begin{remark}\label{thesameassertion} 
Under the assumptions of Lemma \ref{convexonevariableoncompact}, and with the same argument, we obtain the assertion of this lemma for the function $\varphi_{N,\xi}(x)=(1+(x-\xi)^2)^Nf(x)$ instead of $\varphi_N$, where $\xi\in [-R,R]$, with the bound $N>\mathcal{N}(m,2R,D)$. Hence, the assertion of Lemma \ref{convexonevariableoncompact1} holds for the function $\varphi_{N,\xi}$ with the bound  $N>\mathcal{N}(m,2R,D)$.
\end{remark}

The exponent $N$ in Lemma \ref{convexonevariable} 
actually depends on the coefficients of $f$ even when the degree of $f$ is fixed. 

\begin{exa}\label{exa0} Let $f_k(x)=(x-k)^2+1$. Obviously $f_k$ is a convex function. We have $\vf_N(x)=((x-k)^2+1)(1+x^2)^N$ and $\vf_N(k)=(1+k^2)^N$, $\vf(0)=k^2+1$, $\vf_N(\frac{k}{2})=(\frac{k^2}{4}+1)^{N+1}$. Assume that $\vf_N$ is convex. Then
$$
\left(\frac{k^2}{4}+1\right)^{N+1}\le \frac{1}{2}(k^2+1)+\frac{1}{2}(k^2+1)^N.
$$
So the number $N$ (in Lemma \ref{convexonevariable}) such that 
 the function $\vf_N$ is convex tends to infinity as $k\to \infty$. 
\end{exa}

\begin{remark}\label{arbitraryg}
\rm
By a similar argument to that for Lemmas \ref{convexonevariableoncompact} and  \ref{convexonevariable} one can prove (see \cite{Fau}): for any $f\in\rr[x]$ positive on $\rr$ and any  $g\in\rr[x]$ such that $g(x)>0$ and $g''(x)>0$ for $x\in\rr$ there exists $N_0\in\mathbb{N}$ such that for any $N\ge N_0$ the polynomial $fg^N$ is strictly convex on $\rr$.
\end{remark}

\section{Convexifying  polynomials on compact sets}

Let $x=(x_1,\ldots,x_n)$ be a system of variables and let $f\in\rr[x]$ be a polynomial of the form
\begin{equation}\label{formoffmultidimensional1}
f= \sum_{j=0}^d\sum_{|\nu| =j}a_{\nu}x^{\nu}.
\end{equation}
For $R\ge 0$ define $ \mathbb{D}(f,R):=$
\begin{multline*}
\max\Big\{1,\sqrt{1+R^2}\sum_{j=1}^d \sum_{|\nu| =j}j|a_{\nu}| R^{j-1},(1+R^2)\sum_{j=2}^d \sum_{|\nu| =j} j(j-1) |a_{\nu}|R^{j-2}\Big\}.
\end{multline*}


This will be a bound for the first and the second derivatives in \eqref{estimatefprimefbis1} below.

\begin{twr}\label{convexcompactnew}
Let $f\in\rr[x]$ be  positive on a compact convex set $X\subset \rr^n$ containing at least two points. Set $R=\max\{|x|:x\in X\}$, and let 
\begin{equation}\label{estfcompact}
0<m\le \min\{f(x):x\in X\}.
\end{equation}
Then for any $D\ge \mathbb{D}(f,R)$ and  any integer $N\ge \mathcal{N}(m,R,D)$ the polynomial $\vf_N(x)=(1+x_1^2+\cdots+x_n^2)^Nf(x)$ is strongly 
 convex in $X$.
\end{twr}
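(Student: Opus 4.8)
The plan is to reduce Theorem~\ref{convexcompactnew} to the univariate Lemma~\ref{convexonevariableoncompact} by restricting $\vf_N$ to an arbitrary line and completing the square in the weight. Since $X$ is compact and $\vf_N$ is $C^2$, it is enough to show that the Hessian $\grad^2\vf_N(p)$ is positive definite at every $p\in X$: along any segment contained in the convex set $X$ the second derivative of $\vf_N$ is then positive, so $\vf_N$ is convex on $X$, and compactness of $X$ together with continuity of $\grad^2\vf_N$ gives a uniform bound $\grad^2\vf_N\ge\mu I$ on $X$ for some $\mu>0$, i.e.\ strong convexity. So fix $p\in X$ and a unit vector $u\in\rr^n$, and put $\psi(t)=\vf_N(p+tu)$; the goal is $\psi''(0)>0$. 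Write $b=\langle p,u\rangle$ and $c^2=|p|^2-b^2\ge0$ (Cauchy--Schwarz, using $|u|=1$). Then $1+|p+tu|^2=(t+b)^2+(1+c^2)$, so the affine substitution $\sigma=(t+b)/\sqrt{1+c^2}$ turns $\psi$ into
\[
\psi(t)=(1+c^2)^N\,(1+\sigma^2)^N F(\sigma),\qquad F(\sigma):=f\big(p+(\sqrt{1+c^2}\,\sigma-b)u\big),
\]
where $F$ is a univariate polynomial of degree $\le d$. Consequently $\psi''(0)=(1+c^2)^{N-1}G''(\sigma_*)$ with $G(\sigma)=(1+\sigma^2)^N F(\sigma)$ and $\sigma_*=b/\sqrt{1+c^2}$ the value corresponding to $t=0$, so it remains to prove $G''(\sigma_*)>0$.

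Next I collect the data of $F$ at $\sigma_*$. Since $\sqrt{1+c^2}\ge1$ we have $|\sigma_*|\le|b|\le|p|\le R$. The point $p+(\sqrt{1+c^2}\,\sigma_*-b)u$ equals $p$, so $F(\sigma_*)=f(p)\ge m$ by \eqref{estfcompact}. Differentiating through the substitution, $F'(\sigma_*)=\sqrt{1+c^2}\,\langle\grad f(p),u\rangle$ and $F''(\sigma_*)=(1+c^2)\,\langle\grad^2 f(p)u,u\rangle$. For $|y|\le R$, $|u|=1$ one has the elementary coefficient estimates $|\langle\grad f(y),u\rangle|\le\sum_{j=1}^d\sum_{|\nu|=j}j|a_\nu|R^{j-1}$ and $|\langle\grad^2 f(y)u,u\rangle|\le\sum_{j=2}^d\sum_{|\nu|=j}j(j-1)|a_\nu|R^{j-2}$, obtained by majorizing the coefficient of $s$, respectively $s^2$, in $\prod_i(y_i+su_i)^{\nu_i}$ by the corresponding coefficient of $(R+s)^{|\nu|}$. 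Combining these with $\sqrt{1+c^2}\le\sqrt{1+R^2}$ and $1+c^2\le1+R^2$ and the definition of $\mathbb D(f,R)$ yields $|F'(\sigma_*)|\le\mathbb D(f,R)\le D$ and $|F''(\sigma_*)|\le\mathbb D(f,R)\le D$.

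Finally I apply Lemma~\ref{convexonevariableoncompact} to $F$. Its proof is entirely pointwise: the estimates \eqref{eq30}--\eqref{eq60} and the ensuing case analysis show that for $N>\mathcal N(m,R,D)$ one has $\big[(1+\sigma^2)^N F(\sigma)\big]''>0$ at every point $\sigma_0$ with $|\sigma_0|\le R$, $F(\sigma_0)\ge m$, $|F'(\sigma_0)|\le D$ and $|F''(\sigma_0)|\le D$. Applying this at $\sigma_0=\sigma_*$ (the hypotheses being exactly what was verified in the previous paragraph) and using $N\ge\mathcal N(m,R,D)$ gives $G''(\sigma_*)>0$, hence $\psi''(0)>0$ and $\langle\grad^2\vf_N(p)u,u\rangle>0$. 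Since $p\in X$ and the unit vector $u$ were arbitrary, $\grad^2\vf_N$ is positive definite on $X$, and by compactness $\vf_N$ is strongly convex on $X$.

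I expect the only delicate point to be the bookkeeping in the middle paragraph, namely that after completing the square and rescaling, the data of $F$ at $\sigma_*$ is governed by precisely the parameters $m$, $R$ and $D=\mathbb D(f,R)$ of the theorem with no loss. This works because the bounds are needed only at the single point $\sigma_*$ — equivalently, on the $\sigma$-preimage of $X$, which stays inside $B(R)$ — and not on the whole rescaled interval $[-R,R]$, whose image under $\sigma\mapsto p+(\sqrt{1+c^2}\,\sigma-b)u$ would in general leave $B(R)$. Beyond that observation, the argument is a direct transcription of the one-variable case, with the $n$-variable weight $(1+|x|^2)^N$ collapsing exactly to the one-variable weight $(1+\sigma^2)^N$ up to the harmless positive factor $(1+c^2)^N$.
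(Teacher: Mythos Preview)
Your proof is correct and follows essentially the same route as the paper: both reduce to Lemma~\ref{convexonevariableoncompact} by restricting to lines and using the identity $1+|p+tu|^2=(1+c^2)(1+\sigma^2)$, which is exactly the paper's formula \eqref{normag} in the parametrization $\gamma_{\alpha,\beta}$ with $\alpha=p-bu$, $\beta=u$, $|\alpha|^2=c^2$. The only difference is presentational: the paper verifies the derivative bounds \eqref{estimatefprimefbis1} on the whole interval $[-R_{\alpha,\beta},R_{\alpha,\beta}]=\{\sigma:|\gamma_{\alpha,\beta}(\sigma)|\le R\}$, whereas you observe (correctly) that the argument of Lemma~\ref{convexonevariableoncompact} is pointwise and check the hypotheses only at $\sigma_*$---but since that interval is precisely the $\sigma$-preimage of $B(R)$, the paper's bounds hold there for the same reason yours do, and no genuine extra generality is gained.
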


\begin{proof} Let 
$$
\mathcal{A}=\{(\alpha,\beta)\in\rr^n\times \rr^n:\langle \alpha,\beta\rangle=0,\,|\beta|=1\},
$$
and let
\begin{equation}\label{defgamma}
\gamma_{\alpha, \beta }(t): = \sqrt{1+|\alpha|^2}\beta t+\alpha.
\end{equation}
Clearly the family of all $\gamma_{\alpha, \beta }$ with $(\alpha,\beta)\in \mathcal{A}$ parametrizes all affine lines in $\rr^n$. Denote by $\mathcal{B}\subset \mathcal{A}$ the set of all $(\alpha,\beta)\in\mathcal{A}$ for which the line parametrized by $\gamma_{\alpha,\beta}$ intersects $X$. It is easy to see that
$\mathcal{B}$ is a compact set and 
\begin{equation}\label{estcompact}
\mathcal{B}\subset \{(\alpha,\beta)\in\mathcal{A}:|\alpha|\le R\}.
\end{equation}
It suffices to prove that for any $(\alpha,\beta)\in\mathcal{B}$ and  $N\ge \mathcal{N}(m,R,D)$ the function $f\circ \gamma_{\alpha,\beta}$ is strictly convex on  $I_{\alpha,\beta}=\{t\in\rr:\gamma_{\alpha,\beta}(t)\in X\}$. Since $X$ is a compact convex set, $I_{\alpha,\beta}$ is a compact interval or a point.

It is obvious that for $(\alpha,\beta)\in\mathcal{B}$ the set $\{t\in \rr:|\gamma_{\alpha,\beta}(t)|\le R\}$ is an interval centered at $0$ (or a point), say $[-R_{\alpha,\beta},R_{\alpha,\beta}]$. Moreover, we have 
$I_{\alpha,\beta}\subset [-R_{\alpha,\beta},R_{\alpha,\beta}]\subset [-R,R]$.

If $f$ is of the form \eqref{formoffmultidimensional1}, then we easily see that for $t\in \rr$ such that $|\gamma_{\alpha,\beta}(t)|\le R$ we have
$$
|(f\circ \gamma_{\alpha,\beta})'(t)|\le \sqrt{1+R^2}\sum_{j=1}^d \sum_{|\nu| =j}j|a_{\nu}| R^{j-1}
$$
and
$$
|(f\circ \gamma_{\alpha,\beta})''(t)|\le (1+R^2)\sum_{j=2}^d \sum_{|\nu| =j} j(j-1) |a_{\nu}|R^{j-2},
$$
so
\begin{equation}\label{estimatefprimefbis1}
|(f\circ \gamma_{\alpha,\beta})'(t)|\le D,\quad |(f\circ \gamma_{\alpha,\beta})''(t)|\le D\quad \hbox{for } t\in [-R_{\alpha,\beta},R_{\alpha,\beta}]. 
\end{equation}

A simple computation gives
\begin{equation}\label{normag}
1+ |\gamma_{\alpha, \beta }(t)|^2 = (1+|\alpha|^2)(1 +t^2),
\end{equation}
hence
$$
\vf_N\circ\gamma_{\alpha,\beta}(t)=(1+|\alpha|^2)^N (1+t^2)^Nf\circ \gamma_{\alpha,\beta}(t).
$$
Obviously $\vf_N\circ\gamma_{\alpha,\beta}$ is a  strongly
 convex function on $I_{\alpha,\beta}$ if and only if the function 
$I_{\alpha,\beta}\ni t\mapsto (1+t^2)^Nf\circ \gamma_{\alpha,\beta}(t)$ is strongly convex. 
Now applying Lemma \ref{convexonevariableoncompact} we deduce the assertion.
\end{proof}


\begin{remark}\label{thesameassertionasth9} 
Under the assumptions of Theorem \ref{convexcompactnew}, and with the same argument, we obtain the assertion of this theorem for the function $\varphi_{N,\xi}(x)=(1+|x-\xi|^2)^Nf(x)$ instead of $\varphi_N$, where $\xi\in\rr^n$, with the bound $N>\mathcal N(m,2R,D)$.
\end{remark}

\section{Convexity  at infinity}\label{convexityatinfinitysection}

We  briefly recall basic definitions. For a $C^2$ function $f$ in an open subset of~$\rr^n$,  $H_xf$ stands for the Hessian matrix of  $f$ at $x$. The associated  quadratic form $h_x:\rr^n \to  \rr$ reads
\begin{equation}\label{defh}
h_x f(y) = \langle H_x f( y),y\rangle.
\end{equation}
Recall that  the matrix 
$H_x f$ is said to be {\it positive  semidefinite} (respectively {\it   positive definite}) if  $h_x(y) \ge 0$ for any $y\in \rr^n$ (respectively  $h_x f(y) >0$ for $y\ne 0$). 
Set, for $E\subset \{1,\ldots,n\}$, $E\ne \emptyset$,
$$
\Delta^{f}_E:=\det \left[\frac{\partial^2f}{\partial x_i\partial x_j}\right]_{i,j\in E }.
$$
Recall a classical fact (Sylvester criterion):

\begin{lemat}
$H_x f$ is positive  semidefinite (respectively positive definite) if and only if  
$\Delta^{f}_E \ge 0$ (respectively $\Delta^{f}_E >0$) 
 for all nonempty $E\subset \{1,\ldots,n\}$.
\end{lemat}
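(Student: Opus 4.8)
The plan is to prove this as a statement of linear algebra about the symmetric matrix $A:=H_xf$. Write $A_E$ for the principal submatrix $\bigl[\partial^2f/\partial x_i\partial x_j\bigr]_{i,j\in E}$, so that $\Delta^f_E=\det A_E$, and observe that the quadratic form associated with $A_E$ is exactly the restriction of $h_xf$ to the coordinate subspace $\{y\in\rr^n:y_j=0\text{ for }j\notin E\}$. The two easy implications are then immediate: if $H_xf$ is positive semidefinite (respectively positive definite), each $A_E$ is positive semidefinite (respectively positive definite) as such a restriction, so all its eigenvalues are $\ge 0$ (respectively $>0$), whence $\det A_E\ge 0$ (respectively $\det A_E>0$).

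For the converse in the definite case I would prove, by induction on $|E|=n$, the classical assertion that a symmetric matrix $A$ all of whose principal minors are positive is positive definite. The case $n=1$ is trivial. For the inductive step, write $A=\left(\begin{smallmatrix}A'&b\\ b^{\top}&c\end{smallmatrix}\right)$ with $A'$ the leading $(n-1)\times(n-1)$ block; its principal minors are principal minors of $A$, hence positive, so by the inductive hypothesis $A'$ is positive definite and in particular invertible. The congruence with $P=\left(\begin{smallmatrix}I&-(A')^{-1}b\\ 0&1\end{smallmatrix}\right)$ gives $P^{\top}AP=\operatorname{diag}\bigl(A',\,c-b^{\top}(A')^{-1}b\bigr)$ together with $\det A=\det A'\cdot\bigl(c-b^{\top}(A')^{-1}b\bigr)$. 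Since $\det A>0$ and $\det A'>0$, the Schur complement $c-b^{\top}(A')^{-1}b$ is positive, so $P^{\top}AP$, and therefore $A$, is positive definite. Combined with the easy direction, this yields the positive definite equivalence.

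For the semidefinite converse I would use a perturbation argument. Assume $\Delta^f_E=\det A_E\ge 0$ for every nonempty $E$. Since the coefficient of $\varepsilon^{|E|-k}$ in $\det(A_E+\varepsilon I_E)$ is the sum of the $k\times k$ principal minors of $A_E$, we get, for every nonempty $E$ and every $\varepsilon>0$,
\[
\det\bigl(A_E+\varepsilon I_E\bigr)=\sum_{F\subseteq E}\varepsilon^{\,|E|-|F|}\det A_F\ \ge\ \varepsilon^{\,|E|}\det A_{\emptyset}\ =\ \varepsilon^{\,|E|}>0,
\]
with the convention $\det A_{\emptyset}=1$. Hence every principal minor of $A+\varepsilon I$ is positive, so by the definite case $A+\varepsilon I$ is positive definite, and letting $\varepsilon\to 0^{+}$ gives $h_xf(y)=\lim_{\varepsilon\to 0^{+}}\langle(A+\varepsilon I)y,y\rangle\ge 0$ for all $y$, i.e.\ $H_xf$ is positive semidefinite. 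The only genuine obstacle is this last direction: one must invoke \emph{all} principal minors rather than just the leading ones — $\operatorname{diag}(0,-1)$ has both leading principal minors equal to $0$ yet is not semidefinite — and it is precisely the identity $\det(A_E+\varepsilon I_E)=\sum_{F\subseteq E}\varepsilon^{|E|-|F|}\det A_F$ together with the limit $\varepsilon\to 0^{+}$ that brings the whole family of minors into play.
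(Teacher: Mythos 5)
Your proof is correct. The paper itself does not prove this lemma: it is introduced with the words ``Recall a classical fact (Sylvester criterion)'' and then used as a black box, so there is no internal argument to compare against. What you have written is the standard textbook route — the easy direction by restricting the quadratic form to coordinate subspaces, the positive definite converse by Schur-complement induction, and the semidefinite converse by the perturbation $A\mapsto A+\varepsilon I$ together with the expansion $\det(A_E+\varepsilon I_E)=\sum_{F\subseteq E}\varepsilon^{\,|E|-|F|}\det A_F$ followed by $\varepsilon\to 0^{+}$ — and it would serve as a complete justification of the fact the paper cites. You are also right to stress that the semidefinite direction genuinely requires the full family of principal minors rather than only the leading ones; $\mathrm{diag}(0,-1)$ is exactly the standard witness that the leading-minor criterion fails for semidefiniteness, and this is precisely why the lemma is stated over all nonempty $E\subset\{1,\ldots,n\}$.
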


Let $f\in\rr[x]$ and $n\ge 2$. 
We call $f$ \emph{locally convex} 
(respectively \emph{locally strictly convex} or \emph{locally strongly convex}) in an open set $G\subset \rr^n$ if  any point $x\in G$ has a~convex neighbourhood $U\subset \rr^n$ such that the restriction $f|_U$ is convex (respectively strictly convex or strongly convex). In particular $f$ is locally convex in $G$ if and only if $H_x f$ is positive for any $x\in G$. 
We say that $f$ is \emph{convex at infinity}  (respectively  \emph{strictly convex at infinity} or \emph{strongly convex at infinity}) if there exists $R\ge 0$ such that $f$ is locally convex (respectively locally strictly convex or locally strongly convex)  in $G=\{x\in\rr^n:|x|>R\}$. The analogous terminology will be  used for concave functions.

Let $d=\deg f\ge 0$ and let $f_0,\ldots, f_d\in\rr[x]$ be homogeneous polynomials such that $f_i=0$ or $\deg f_i=i$,
 and
$f=f_0+\cdots+f_d.$
Since $d=\deg f$, we have $f_d\ne 0$.

\begin{lemat}\label{convex2} If  $f$ is convex at infinity, then $f_d$ is a convex function.
\end{lemat}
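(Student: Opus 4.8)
\textbf{Proof plan for Lemma \ref{convex2}.}

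The plan is to transfer the hypothesis on $f$ to its leading form $f_d$ by a scaling argument together with the Sylvester criterion stated in the preceding lemma. First I would recall that convexity at infinity means there is $R\ge 0$ such that $H_xf$ is positive semidefinite for every $x$ with $|x|>R$; by the Sylvester criterion this is equivalent to $\Delta^f_E(x)\ge 0$ for all nonempty $E\subset\{1,\ldots,n\}$ and all such $x$. Fix a point $y\in\rr^n$ with $|y|=1$ (it suffices to check the Hessian of $f_d$ at points on the unit sphere, since $f_d$ is homogeneous and $H_{ty}f_d=t^{d-2}H_yf_d$ for $t>0$). For large $t>0$ the point $ty$ satisfies $|ty|>R$, so $\Delta^f_E(ty)\ge 0$ for every nonempty $E$.

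Next I would analyze the growth in $t$ of each $\Delta^f_E(ty)$. Writing $f=f_0+\cdots+f_d$, the entry $\partial^2 f/\partial x_i\partial x_j$ evaluated at $ty$ is a polynomial in $t$ whose top-degree term is $t^{d-2}\,\partial^2 f_d/\partial x_i\partial x_j(y)$ (the contributions of $f_0,\ldots,f_{d-1}$ have strictly lower degree in $t$). Expanding the $|E|\times|E|$ determinant, the coefficient of the highest power $t^{|E|(d-2)}$ is exactly $\Delta^{f_d}_E(y)$. Hence $\Delta^f_E(ty)=\Delta^{f_d}_E(y)\,t^{|E|(d-2)}+O(t^{|E|(d-2)-1})$ as $t\to\infty$. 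Dividing the inequality $\Delta^f_E(ty)\ge 0$ by $t^{|E|(d-2)}$ and letting $t\to\infty$ yields $\Delta^{f_d}_E(y)\ge 0$. (If $d\le 1$ then $f_d$ is affine and the statement is trivial, so we may assume $d\ge 2$ and $d-2\ge 0$.)

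Since $y$ on the unit sphere was arbitrary and $\Delta^{f_d}_E$ is homogeneous, $\Delta^{f_d}_E\ge 0$ on all of $\rr^n$ for every nonempty $E\subset\{1,\ldots,n\}$; by the Sylvester criterion $H_xf_d$ is positive semidefinite for every $x$, i.e., $f_d$ is a convex function. The only mildly delicate point is the degree bookkeeping in the determinant expansion — one must check that no cancellation occurs in the leading coefficient and that lower-order terms in $t$ genuinely have strictly smaller exponent — but this is a routine multilinearity computation. The main conceptual step is simply the observation that convexity is a closed condition detected by the principal minors, which behave under dilation as homogeneous functions dominated by the leading form.
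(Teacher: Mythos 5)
Your proof is correct and is essentially the paper's argument run in the contrapositive-free direction: both rest on the observation that, for $|E|=k$, the leading coefficient in $t$ of $\Delta^f_E(ty)$ is $\Delta^{f_d}_E(y)$, so nonnegativity of the former for large $t$ forces nonnegativity of the latter. The paper instead supposes $\Delta^{f_d}_E(x_0)<0$ for some $x_0$ and derives $\Delta^f_E(tx_0)<0$ for $t$ large, contradicting convexity at infinity — a purely cosmetic difference.
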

\begin{proof}
Assume that $f_d$ is not convex. Then for some nonempty $E\subset\{1,\ldots,n\}$ and $x_0\ne 0$, 
$\Delta^{f_d}_E(x_0)<0$. Since  $\Delta^{f_d}_E$ is nonzero it must be  a homogeneous polynomial of degree $k(d-2)$, where $k$ is the number of elements of $E$. 
Then 
$$
\Delta^{f}_E(tx_0)=t^{k(d-2)}\Delta^{f_d}_E (x_0)+ F(t)
$$
with some polynomial  $F(t)$ of degree less than  $k(d-2)$. 
So $\Delta^{f}_E(tx_0)<0$ as $t\to \infty$, hence  $f$ is not convex at infinity, which contradicts the assumption.
\end{proof}

To obtain the convexity of $\vf_N$ we will assume that $f_d(x)>0$ for $x\in\rr^n\setminus\{0\}$.  This assumption is natural, as the following proposition shows. 

\begin{prop}\label{pconvex1}
The following conditions are equivalent:
\begin{enumerate}
\item[\rm (a)] $f_d(x)>0$ for $x\in\rr^n\setminus\{0\}$,

\item[\rm (b)] there exist $R>0$ and $N_0\in\nn$ such that for any integer $N\ge N_0$ the polynomial 
\begin{equation*}\label{vfpolynomial1new}
\varphi_N(x)=(1+x_1^2+\cdots+x_n^2)^Nf(x)
\end{equation*}
is locally strongly convex on $G=\{x\in\rr^n:|x|>R\}$,

\item[\rm (c)] there exists $N_0\in\nn$ such that for any integer $N\ge N_0$ the polynomial $\varphi_N$ is convex at infinity.
\end{enumerate}
\end{prop}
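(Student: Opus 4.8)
The plan is to prove the chain of implications $(a)\Rightarrow(b)\Rightarrow(c)\Rightarrow(a)$. The implication $(b)\Rightarrow(c)$ is immediate from the definitions, since local strong convexity on $G=\{|x|>R\}$ is in particular convexity at infinity. For $(c)\Rightarrow(a)$ I would argue by contraposition using Lemma~\ref{convex2}: the leading form of $\varphi_N$ is $(x_1^2+\cdots+x_n^2)^N f_d(x)$, a homogeneous polynomial of degree $2N+d$. If $f_d$ is not positive on $\rr^n\setminus\{0\}$, then since $f_d$ is the leading form of $f$ and $n\ge 2$, either $f_d$ vanishes somewhere on the unit sphere or takes a negative value; in the first case one checks $(|x|^2)^N f_d$ is not locally strictly convex (a form vanishing on a ray cannot have positive definite Hessian there), and in the second case $(|x|^2)^N f_d$ is not even convex, so by Lemma~\ref{convex2} applied to $\varphi_N$ (whose leading form is $(|x|^2)^N f_d$) we conclude $\varphi_N$ is not convex at infinity for any $N$. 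Actually the cleanest route: if $\varphi_N$ is convex at infinity then by Lemma~\ref{convex2} its leading form $(|x|^2)^N f_d$ is convex; restricting to a line through a point where $f_d<0$ gives a contradiction, and if $f_d\ge 0$ but $f_d(x_0)=0$ for some $x_0\ne 0$ one rules this out by a second-order/homogeneity argument (a nonnegative convex form vanishing at $x_0\ne0$ must vanish on the whole line $\rr x_0$, forcing $f_d$ to have a nontrivial linear factor raised to... — more carefully, one uses that strict convexity fails, contradicting the \emph{strong} convexity assumed in (b) but (c) only asks for convexity, so for $(c)\Rightarrow(a)$ I must genuinely exclude the degenerate case, which I would do by noting that if $f_d\ge 0$ vanishes at $x_0\ne 0$ then along the affine line $t\mapsto x_0 + t v$ for suitable $v$ the restriction of $(|x|^2)^Nf_d$ has a zero of even order $\ge 2$ at an interior point, and comparing with the behaviour forced by convexity plus the explicit degree one derives that some principal minor $\Delta_E$ changes sign — this is the one spot requiring care).

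The substantive implication is $(a)\Rightarrow(b)$. The idea is to compute the Hessian of $\varphi_N = (1+|x|^2)^N f$ and show it is positive definite for $|x|$ large, uniformly once $N$ is big enough. Writing $\rho(x)=1+|x|^2$, a direct differentiation gives, for the associated quadratic form at a point $x$ evaluated on a vector $y$,
\begin{equation*}
h_x\varphi_N(y) = \rho^{N}\, h_x f(y) + 2N\rho^{N-1}\big(\langle\nabla f,\nabla \rho\rangle\langle?\rangle\big) + \text{(terms)},
\end{equation*}
so rather than writing this out I would organize it as $h_x\varphi_N(y) = \rho^{N-2}\,\Pi_N(x,y)$ where $\Pi_N$ is a polynomial, and isolate the dominant term in $N$: the coefficient of $N^2$ is $4\rho^{N-2}\langle x,y\rangle^2 f(x)$, which is $\ge 0$ since $f>0$ on $X$ — but of course we need $f>0$ only near infinity within the region of interest, and here the hypothesis is that $f$ is a polynomial with $f_d>0$ on the sphere, hence $f(x)>0$ for all $|x|$ large, with $f(x)\sim |x|^d\cdot(\text{positive})$. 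The terms linear in $N$ involve $\langle x,y\rangle$, $f$, $\nabla f$, and the degree-zero term is $\rho^2 h_x f(y)$. The mechanism is exactly that of Lemma~\ref{convexonevariableoncompact} but now in $n$ variables: split $y$ into its component along $x$ and its component orthogonal to $x$. Along $x$, the $N^2 f$ term dominates; orthogonal to $x$, the term $2N\rho^{N-1}f\,|y|^2$ (coming from differentiating $\rho^N$ once in each slot) dominates the bounded contributions from $h_x f$, using $f(x)\ge c|x|^d$ and the polynomial bounds $|h_x f(y)|\le C|x|^{d-2}|y|^2$. The cross terms are controlled by Cauchy–Schwarz. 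Choosing $N_0$ to beat the ratios of these bounds yields positive definiteness.

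The main obstacle, I expect, is the bookkeeping in the multivariate Hessian computation and, more essentially, reducing it to a one-variable statement cleanly. The elegant shortcut — which the paper has set up deliberately — is to \emph{not} compute $H_x\varphi_N$ directly but instead invoke the restriction-to-lines principle already used in Theorem~\ref{convexcompactnew}: for any affine line $\gamma_{\alpha,\beta}(t)=\sqrt{1+|\alpha|^2}\,\beta t+\alpha$ with $\langle\alpha,\beta\rangle=0$, $|\beta|=1$, one has the identity $1+|\gamma_{\alpha,\beta}(t)|^2 = (1+|\alpha|^2)(1+t^2)$, so $\varphi_N\circ\gamma_{\alpha,\beta}(t) = (1+|\alpha|^2)^N(1+t^2)^N (f\circ\gamma_{\alpha,\beta})(t)$, reducing matters to the one-variable Lemma~\ref{convexonevariableoncompact} applied to $f\circ\gamma_{\alpha,\beta}$. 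Convexity of $\varphi_N$ on a convex open set is equivalent to convexity of all its restrictions to line segments, and for $|x|$ large every short segment near $x$ is cut from some line $\gamma_{\alpha,\beta}$ with $|\alpha|$ comparable to $|x|$ (namely $|\alpha| = \dist(0,\text{line})\le |x|$, but also $|\gamma(t)|$ stays large). The delicate point is that the needed bound $N>\mathcal N(m,R,D)$ in the one-variable lemma depends on a \emph{lower} bound $m$ for $f\circ\gamma_{\alpha,\beta}$ on the relevant $t$-interval and on bounds $D$ for its first two derivatives; to get a \emph{uniform} $N_0$ I must show these constants can be chosen uniformly over all lines meeting $G$, which is exactly where $f_d>0$ on the sphere enters — it gives $f(x)\ge c|x|^d$ for $|x|\ge R$, while derivative bounds grow only like $|x|^{d-1}$, $|x|^{d-2}$, and tracking the $t$-variable scaling through $\gamma_{\alpha,\beta}$ shows $\mathcal N$ stays bounded. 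So the proof structure is: establish $(a)\Rightarrow(b)$ via the line-restriction identity plus a uniform version of Lemma~\ref{convexonevariableoncompact}, note $(b)\Rightarrow(c)$ trivially, and prove $(c)\Rightarrow(a)$ by Lemma~\ref{convex2} together with the degenerate-case analysis of nonnegative forms vanishing off the origin.
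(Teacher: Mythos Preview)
Your overall chain $(a)\Rightarrow(b)\Rightarrow(c)\Rightarrow(a)$ and your choice to prove $(a)\Rightarrow(b)$ by restricting to lines via $\gamma_{\alpha,\beta}(t)=\sqrt{1+|\alpha|^2}\,\beta t+\alpha$ and invoking the one-variable lemma is exactly the paper's route. Two points deserve comment.

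\textbf{On $(a)\Rightarrow(b)$.} You correctly identify that the issue is uniformity of the bound $\mathcal N(m,R,D)$ over all lines, and you are right that positivity of $f_d$ is what makes it work, but your sentence ``tracking the $t$-variable scaling through $\gamma_{\alpha,\beta}$ shows $\mathcal N$ stays bounded'' hides the mechanism. The paper renormalizes: set
\[
g_{\alpha,\beta}(t):=(1+|\alpha|^2)^{-d/2}\,f\circ\gamma_{\alpha,\beta}(t),
\]
and proves a small lemma that the coefficients of $g_{\alpha,\beta}$ are bounded \emph{uniformly} in $(\alpha,\beta)\in\mathcal A$. Since the leading coefficient is $a_0(\alpha,\beta)=f_d(\beta)\ge e>0$, the root bound $K(g_{\alpha,\beta})$ is uniformly bounded; and from $f(x)\ge m(1+|x|^2)^{d/2}$ for $|x|\ge R$ together with $1+|\gamma_{\alpha,\beta}(t)|^2=(1+|\alpha|^2)(1+t^2)$ one gets $g_{\alpha,\beta}(t)\ge m$ whenever $|\gamma_{\alpha,\beta}(t)|\ge R$. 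Then Lemma~\ref{convexonevariable} applies with uniform constants. This renormalization is the clean substitute for the scaling bookkeeping you allude to.

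\textbf{On $(c)\Rightarrow(a)$.} The case $f_d(x_0)<0$ is fine (the paper actually argues slightly differently, applying Theorem~\ref{convexcompactnew} to $-f$ near $tx_0$ to get $\varphi_N$ strictly concave there, but your route through Lemma~\ref{convex2} also works). The genuine gap is the degenerate case $f_d\ge 0$, $f_d(x_0)=0$, $x_0\ne 0$: your sketch ``a zero of even order at an interior point \ldots\ some principal minor changes sign'' is not a valid argument --- a convex function can perfectly well have an interior zero of even order. The paper's argument is this: by Lemma~\ref{convex2} the leading form $\psi_N=|x|^{2N}f_d$ is convex, hence $f_d^{-1}(0)=f_d^{-1}((-\infty,0])=\psi_N^{-1}((-\infty,0])$ is convex; being also a cone (by homogeneity of $f_d$), it is a linear subspace of positive dimension. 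After a linear change of coordinates one reduces to $n=2$ with $f_d^{-1}(0)=\rr\times\{0\}$, so $f_d(x_1,x_2)=x_2^{s}\tilde f(x_1,x_2)$ with $\tilde f>0$ on $\rr^2\setminus\{0\}$. Then on the level set $\psi_N=1$ one has $x_2\to 0$ as $|x_1|\to\infty$, so the sublevel set $\psi_N^{-1}((-\infty,1])$ is a strip that thins to zero width at infinity while containing a neighbourhood of the origin --- visibly not convex, contradicting the convexity of $\psi_N$. This sublevel-set argument is the missing idea in your proposal.
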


\begin{proof}  (a)$\Rightarrow$(b). 
We use the notations 
\eqref{defgamma} of the proof of Theorem \ref{convexcompactnew}, namely $\mathcal{A}=\{(\alpha,\beta)\in\rr^n\times\rr^n:|\beta|=1,\,\langle\alpha,\beta\rangle=0\}$ and 
$\gamma_{\alpha, \beta }(t): = \sqrt{1+|\alpha|^2}\beta t+\alpha$. 
We shall use a convenient renormalization of $f\circ \gamma_{\alpha, \beta }$.  For $(\alpha,\beta)\in\mathcal{A}$ we set
\begin{equation}\label{renorm}
g_{\alpha, \beta }(t): =  (\sqrt{1+|\alpha|^2})^{-d}f\circ \gamma_{\alpha, \beta }(t).
\end{equation}
The next crucial lemma gives an estimate on the size of the coefficients of 
$$  f\circ \gamma_{\alpha, \beta }(t)=\sum_{i=0}^d c_i(\alpha, \beta ) t^{d-i}.$$

\begin{lemat}\label{coefcomp} 
There exists a constant $C>0$ such that for any $(\alpha, \beta) \in \mathcal{A}$,
 \begin{equation}\label{coefcompeq}
|c_i(\alpha, \beta )|  \le  C(\sqrt{1+|\alpha|^2})^d \quad \hbox{for $ i=0,\dots, d$.}
\end{equation}
\end{lemat}

\begin{proof} It is enough to check the assertion for a monomial  $ax_1^{k_1}\cdots x_n^{k_n}$ with
$k_1+\cdots +k_n\le d$.
\end{proof}

Write 
$
g_{\alpha, \beta }(t)= (\sqrt{1+|\alpha|^2})^{-d}f\circ \gamma_{\alpha, \beta }(t)= \sum_{i=0}^d a_i(\alpha, \beta ) t^{d-i}.
$
Lemma  \ref{coefcomp} yields  a uniform estimate for the coefficients: 
$$
|a_i(\alpha, \beta )|\le C,\quad i= 0, \dots, d.
$$
By the assumption that $f_d (x) >0$ for $x\ne 0$ it follows that  
$$
a_0(\alpha, \beta) = f_d(\beta) \ge \inf_{|x| =1}f_d(x)=e>0,
$$
so for $K=1+2C/e$ we have
$
K\ge 1+2\sup_{(\alpha,\beta)\in A}\max_{i=1,\ldots,d}\big|\frac{a_i(\alpha,\beta)}{a_0(\alpha,\beta)}\big|^{1/i}.
$
Take $R\ge K$ and let
\begin{equation*}\label{estDeqnewold}
D\ge \max \Big\{1,\,C\sum_{i=0}^{d-1}(d-i) R^{d-i-1},
\  
C\sum_{i=0}^{d-2}(d-i)(d-i-1) 
R^{d-i-2}\Big\}.
\end{equation*}
Then $g'_{\alpha,\beta}(t)\le D$ and $g''_{\alpha,\beta}(t)\le D$ for $t\in[-R,R]$.  
Again by the assumption  that $f_d (x) >0$ for $x\ne 0$, one can assume that there exists $m>0$ such that for $|x|\ge R$ we have $f(x)\ge m(\sqrt{1+|x|^2})^d$.  So 
$$
g_{\alpha,\beta}(t)\ge (\sqrt{1+|\alpha|^2})^{-d} m\Big(\sqrt{1+|\gamma_{\alpha,\beta}(t)|^2}\Big)^d\ge m \quad \hbox{for } |\gamma_{\alpha,\beta}(t)|\ge R.
$$
To end the proof of the implication (a)$\Rightarrow$(b) 
 it is enough to apply Lemma \ref{convexonevariable}.

The implication (b)$\Rightarrow$(c) is trivial.

(c)$\Rightarrow$(a). Observe that 
\begin{equation}\label{eqfdpositive0}
f_d(x)\ge 0\quad\hbox{for } x\in\rr^n.
\end{equation}
Indeed, suppose there exists $x_0\in\rr^n\setminus\{0\}$ such that $f_d(x_0)<0$. Let $t\in\rr$, $t>0$, be such that $tx_0 \in G$. Since $f_d$ is the leading form of $f$, we may assume that $f(tx_0)<0$. Let $H\subset G$ be a compact convex neighbourhood of $tx_0$ such that $f(x)<0$ for $x\in H$. By Theorem \ref{convexcompactnew} there exists $N_0\in \nn$ such that for  $N\ge N_0$ the polynomial $\vf_N$ is strictly concave on $H$. This contradicts (c) and gives~\eqref{eqfdpositive0}.
 
Assume to the contrary that (a) fails. Then by \eqref{eqfdpositive0}, $f_d^{-1}(0)\ne \{0\}$. 
The leading form of $\vf_N$ is equal to $\psi_N(x)=(x_1^2+\cdots+x_n^2)^Nf_d(x)$ and by Lemma \ref{convex2} this form is convex. So $\psi_N^{-1}((-\infty,0])$ is a convex set, and by \eqref{eqfdpositive0}, so is $f_d^{-1}(0)=f_d^{-1}((-\infty,0])$. Consequently, the level set $f_d^{-1}(0)$ is a linear subspace, say of dimension $k>0$ (since $f_d$ is a homogeneous polynomial). By choosing a suitable coordinate system, we may assume that $\psi_N^{-1}(0)=f_d^{-1}(0)=\rr^k\times \{0\}$. Since $f_d\ne 0$, we have $k<n$. As $\psi_N|_{\rr^{k+1}\times\{0\}}$ for $k+1<n$ is also a convex function, we may assume that $n=k+1$, and moreover that $n=2$ and $k=1$. Then
$$
f_d(x_1,x_2)=x_2^{s}\tilde f(x_1,x_2)
$$
for some $s\in\nn^*$ and a homogeneous polynomial $\tilde f$ such that $\tilde f(x_1,x_2)>0$ for $(x_1,x_2)\in\rr^2\setminus \{0\}$, and $\psi_N(x_1,x_2)=x_2^s(x_1^2+x_2^2)^N\tilde f(x_1,x_2)$. Observe that for $\psi_N(x_1,x_2)=1$ we have $x_2\to 0$ as $x_1\to  \infty$ or $x_1\to-\infty$.   Consequently, the set $\psi_N^{-1}((-\infty, 1])$ is not convex, which contradicts the convexity of $\psi_N$. This gives (a) and ends the proof of (c)$\Rightarrow$(a).
The proof of  Proposition~\ref{pconvex1} is complete.
\end{proof}

\begin{twr}\label{convexonconvexsetnew}
Let $X\subset \rr^n$ be a convex closed set. Assume that 
$f$ is positive on~$X$,
\begin{equation}\label{assumptioncompact}
f_d^{-1}(0) = \{0\}
\end{equation}
and there exists $m\in\rr$ such that
\begin{equation}\label{estfcompact1}
0<m\le \inf \{f(x):x\in X\}.
\end{equation}
Then there exists $N_0\in\nn$ such that for any integer $N\ge N_0$ 
 the polynomial $\vf_N(x)=(1+x_1^2+\cdots+x_n^2)^Nf(x)$ is strongly convex on $X$.
\end{twr}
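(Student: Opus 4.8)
The plan is to reduce the noncompact case to the compact Theorem~\ref{convexcompactnew} together with the convexity-at-infinity result Proposition~\ref{pconvex1}. First I would observe that, since $f_d^{-1}(0)=\{0\}$ and $f_d$ is a homogeneous polynomial, compactness of the unit sphere gives $e:=\inf_{|x|=1}f_d(x)>0$; hence by a standard leading-form argument there is $R_1>0$ such that $f(x)\ge \tfrac12 e|x|^d$ for $|x|\ge R_1$, and in particular $f$ is bounded below by a positive constant outside $B(R_1)$. Combined with \eqref{estfcompact1} this shows $f$ is positive on all of $\rr^n$, not merely on $X$, and bounded below on $X$ by $m>0$.

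Next I would apply Proposition~\ref{pconvex1}, implication (a)$\Rightarrow$(b): since $f_d(x)>0$ for $x\neq 0$, there exist $R\ge R_1$ and $N_1\in\nn$ such that for every integer $N\ge N_1$ the polynomial $\vf_N$ is locally strongly convex on $\{x:|x|>R\}$, hence its Hessian $H_x\vf_N$ is positive definite there. This handles the ``far away'' part of $X$, namely $X\setminus B(R)$. For the ``near'' part, $X\cap B(R)$, which is a compact convex set, I would apply Theorem~\ref{convexcompactnew} (or its trivial variant when $X\cap B(R)$ has at most one point): with $m$ as above, $R$ as the radius bound, and $D\ge\mathbb D(f,R)$, for every integer $N\ge\mathcal N(m,R,D)$ the polynomial $\vf_N$ is strongly convex on $X\cap B(R)$, so $H_x\vf_N\succ 0$ on that set too. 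Taking $N_0=\max\{N_1,\lceil\mathcal N(m,R,D)\rceil+1\}$, for $N\ge N_0$ the Hessian of $\vf_N$ is positive definite at every point of $X\subset (X\cap B(R))\cup(X\setminus B(R))$.

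Finally, from pointwise positive definiteness of the Hessian on the convex set $X$ I would conclude strong convexity of $\vf_N$ on $X$: for a $C^2$ function whose Hessian is positive definite at each point of a convex set, the function is strictly convex there, and strong convexity on $X$ (with a uniform modulus) follows because $X$ may be taken compact in the application, or more directly because on the compact part the Hessian is uniformly positive definite while at infinity the leading form $\psi_N(x)=|x|^{2N}f_d(x)$ of $\vf_N$ is strictly positive off the origin, forcing a uniform lower bound on the smallest Hessian eigenvalue after normalization; if $X$ is unbounded one states strong convexity in the sense that $H_x\vf_N\succ 0$ for all $x\in X$, matching the convention used earlier for ``locally strongly convex''.

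The main obstacle is the gluing step: local strong convexity at infinity plus strong convexity on a compact piece does not immediately give a single global modulus of strong convexity on an unbounded $X$, so one must either restrict attention to the positive-definiteness-of-Hessian formulation (which is what is actually needed downstream and is consistent with the paper's definitions) or invoke the strict positivity of the leading form $\psi_N$ to control the Hessian eigenvalues uniformly at infinity. A secondary technical point is checking that the $R$, $D$, $m$ furnished by Proposition~\ref{pconvex1} and those required by Theorem~\ref{convexcompactnew} can be chosen compatibly, which is routine since one is free to enlarge $R$ and $D$.
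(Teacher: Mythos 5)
Your approach is essentially the one the paper uses: apply Proposition~\ref{pconvex1}(a)$\Rightarrow$(b) to get local strong convexity of $\vf_N$ on $\{|x|>R\}$ for $N\ge N_1$, apply Theorem~\ref{convexcompactnew} to the compact convex piece $X\cap B(R+1)$ to get strong convexity there for $N\ge N_2$, and take $N_0=\max\{N_1,N_2\}$. Your closing remark that the ``gluing'' step deserves care (pointwise positive-definite Hessian does not automatically yield a uniform modulus on an unbounded $X$) is a legitimate observation; the paper itself dismisses this with ``summing up'' and implicitly relies on the fact that on lines the second derivative is bounded below by a positive constant on the compact part and tends to $+\infty$ at infinity, which is exactly what your ``leading-form'' remark captures.

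There is, however, a genuine gap at the very first step. You write that $f_d^{-1}(0)=\{0\}$ together with compactness of the unit sphere gives $e:=\inf_{|x|=1}f_d(x)>0$. Compactness gives only that the infimum is attained and nonzero; it does not determine the sign. The hypothesis \eqref{assumptioncompact} is compatible with $f_d$ being negative off the origin (for $n\ge 2$ the sphere is connected so $f_d$ has constant sign on it, and that sign could be $-1$; for $n=1$ with $d$ odd, $f_d$ even takes both signs). The paper deals with this by a case split you omitted: if $f_d(x_0)<0$ for some $x_0\ne 0$, then (for $n\ge2$) $f_d<0$ everywhere off $0$, so $f\to-\infty$ at infinity; combined with $f\ge m>0$ on $X$ this forces $X$ to be compact, and the assertion follows directly from Theorem~\ref{convexcompactnew} with no need for Proposition~\ref{pconvex1}. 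Only in the remaining case $f_d>0$ off $0$ does your argument proceed. You should insert this dichotomy (or explicitly argue, using connectedness of the sphere, why $f_d$ must be positive given the other hypotheses) before invoking the leading-form lower bound; as written, the claimed positivity of $e$ is unjustified and the ``far away'' half of the proof would not get off the ground when $f_d$ is negative.
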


\begin{proof} If $f_d(x)<0$ for some  $x\ne 0$, then $X$ is a compact set and the assertion follows from Theorem \ref{convexcompactnew}.

Assume that $f_d(x)>0$ for any  $x\ne 0$. If $X$ is a bounded set, then the assertion immediately follows from Theorem \ref{convexcompactnew}. So assume that $X$ is  unbounded. Since $f_d(x)>0$ for $x\ne 0$, by Proposition \ref{pconvex1} there are $R\ge 0$ and $N_1\in\nn$ such that for $N\ge N_1$ the polynomial $\varphi_N$ is strongly (locally) convex in $\{x\in \rr^n:|x|\ge R\}$. By Theorem \ref{convexcompactnew} one can assume that for $N\ge N_1$, the polynomial $\vf_N$ is strongly convex on  $\{x\in X:|x|\le R+1\}$. Summing up, for $N\ge N_1$ the polynomial $\vf_N$ is strongly convex on $X$.
\end{proof}

\begin{remark}\label{reznickrem}
\rm If $X=\rr^n$ then, for any $N$ large enough, $\vf_N(x)$ 
 is not only  strictly convex, but it is a sum of squares of polynomials. More precisely,  
if $f$ satisfies the assumptions  of  Theorem
\ref{convexonconvexsetnew} with $X=\rr^n$, then its homogenization, denoted by $p$, satisfies the assumption of Reznick's theorem \cite[Theorem 3.12]{Reznick}. So after dehomogenization of 
$(x_0^2+x_1^2+\cdots+x_n^2)^Np(x)$  we see that our function $\vf_N$ is  a sum of even powers of affine functions. Hence $\vf_N$ is convex  and it is a sum of squares of polynomials.
However, this method cannot be applied  if  $X$ is a proper subset of~$\rr^n$.
\end{remark}

\begin{cor}\label{conpositivcor} Let $X\subset \rr^n$ be a closed convex 
 semialgebraic  set containing at least two points, 
 let $f\in \rr[x]$, and let  $d>\deg f$ be an even integer. Then the following conditions are equivalent:
\begin{enumerate}
\item[\rm (1)] $f$ is nonnegative on $X$,

\item[\rm (2)] for any $a,b>0$ there exists $N_0\in\nn$ such that for any integer $N\ge N_0$   the polynomial $\varphi_N(x)=(1+|x|^2)^N(f(x)+a|x|^{d}+b)$ is a strongly convex function on $X$.
\end{enumerate}
\end{cor}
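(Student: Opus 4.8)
The plan is to prove the two implications separately. The implication (2)$\Rightarrow$(1) is the easy direction: if $f(x_0)<0$ for some $x_0\in X$, then for $a,b>0$ small enough the polynomial $f(x)+a|x|^d+b$ is still negative at $x_0$; but a strongly convex function on $X$ need not be nonnegative, so this alone is not a contradiction. The correct argument is instead to pick $a,b>0$ so that $g:=f+a|x|^d+b$ is strictly \emph{negative} on a full-dimensional compact convex neighborhood $H\subset X$ of $x_0$ (possible when $X$ contains at least two points and hence has positive dimension near a suitable interior-like point, or else by working along a segment in $X$). Then $\varphi_N = (1+|x|^2)^N g$ is negative on $H$, and multiplying a negative function by a positive factor cannot make it convex on a set of dimension $\ge 1$ unless... — more carefully, by Theorem \ref{convexcompactnew} applied to $-g>0$ on $H$, for $N$ large $\varphi_N=-(1+|x|^2)^N(-g)$ is strongly \emph{concave} on $H$, contradicting (2). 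So (2)$\Rightarrow$(1) follows, taking care that $H$ can be chosen of dimension $\ge 1$ since $X$ has at least two points (restrict to a segment if necessary, where concavity vs. convexity still gives a contradiction).

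For the main direction (1)$\Rightarrow$(2), suppose $f\geq 0$ on $X$ and fix $a,b>0$. Set $g(x):=f(x)+a|x|^d+b$. The first key observation is that $g$ is \emph{positive} on $X$: indeed $g(x)\ge 0+0+b=b>0$ for all $x\in\rr^n$, so in particular $\inf_X g\ge b>0$, which supplies the lower bound \eqref{estfcompact1} with $m=b$. The second key observation concerns the leading form of $g$: since $d>\deg f$ is even, the term $a|x|^d=a(x_1^2+\cdots+x_n^2)^{d/2}$ is the unique top-degree part of $g$, so the leading form of $g$ is $g_d=a(x_1^2+\cdots+x_n^2)^{d/2}$, which is strictly positive on $\rr^n\setminus\{0\}$; hence $g_d^{-1}(0)=\{0\}$, which is exactly hypothesis \eqref{assumptioncompact} for $g$. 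Therefore $g$ satisfies all the hypotheses of Theorem \ref{convexonconvexsetnew}, and we conclude that there exists $N_0\in\nn$ such that for every integer $N\ge N_0$ the polynomial $(1+|x|^2)^N g(x)=\varphi_N(x)$ is strongly convex on $X$. This is precisely (2).

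The step I expect to require the most care is the (2)$\Rightarrow$(1) direction, specifically arranging a compact convex subset of $X$ of dimension $\ge 1$ on which $g$ is strictly negative, and then invoking Theorem \ref{convexcompactnew} for $-g$ to derive strong concavity of $\varphi_N$ there — one must check that strong concavity on a segment genuinely contradicts strong convexity on all of $X$ (it does: restriction of a strongly convex function to an affine segment is strongly convex, and a nonconstant affine-parametrized function cannot be simultaneously strongly convex and strongly concave). The $(1)\Rightarrow(2)$ direction is essentially bookkeeping: one only needs to verify that adding $a|x|^d+b$ with $d>\deg f$ even repairs both defects that could prevent applying Theorem \ref{convexonconvexsetnew} to $f$ itself — namely it makes the function strictly positive everywhere (hence bounded below by $b$ on $X$) and forces the leading form to be a positive definite form whose only zero is the origin.
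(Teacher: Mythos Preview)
Your proof is correct and follows essentially the same route as the paper. For (1)$\Rightarrow$(2) both you and the paper verify the two hypotheses of Theorem~\ref{convexonconvexsetnew} for $g=f+a|x|^d+b$ and apply it; for (2)$\Rightarrow$(1) both argue that for small $a,b$ the function $g$ is negative near some $x_0\in X$, and then that $-\varphi_N$ is (strongly) convex there for large $N$, yielding a contradiction --- you make the appeal to Theorem~\ref{convexcompactnew} and the dimension-$\ge 1$ issue explicit, while the paper handles this more tersely by passing to a relative-interior point of $X$.
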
 

\begin{proof}
The polynomial $f(x)+a|x|^d+b$ satisfies the assumptions
of Theorem \ref{convexonconvexsetnew} if $a,b>0$. Hence the implication (1)$\Rightarrow$(2) follows from Theorem \ref{convexonconvexsetnew}. 
To prove the converse assume that  $f(x_0)<0$ for some $x_0\in \Int X$.
 Note that $X$, being convex and containing at least two points, 
   has nonempty (relative) interior. Then for sufficiently small $a,b$ and $N$ large enough, the function $-\varphi_N$ is strictly convex in a neighbourhood of $x_0$. So $\varphi_N$ is strongly concave in a neighbourhood of $x_0$, which is absurd.
\end{proof}

%

For homogeneous polynomials on $\rr^n$  we obtain the following extension of  Rez\-nick's result  mentioned in the Introduction. 
For a fixed  $f\in\rr[x]$ and a positive integer $N$, we set    $\psi_N(x):=(x_1^2+\cdots+x_n^2)^Nf(x)$.

\begin{cor}\label{Reznickconpositivcor}
Let $f\in\rr[x]$ be a nonzero homogeneous polynomial. The following conditions are equivalent:
\begin{enumerate}
\item[\rm (a)] $f(x)>0$ for $x\in\rr^n\setminus\{0\}$,

\item[\rm (b)] there exists $N_1\in\nn$ such that for any  $N\ge N_1$ the polynomial $\psi_N$ is a sum of even powers of linear functions,

\item[\rm (c)] there exists $N_2\in\nn$ such that for any  $N\ge N_2$ the polynomial
$\psi_N$ 
is a convex function,

\item[\rm (d)] there exists $N_3\in\nn$ such that for any  $N\ge N_3$ the polynomial $\psi_N$  
 is a strictly convex function.
\end{enumerate}
\end{cor}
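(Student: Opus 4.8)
The plan is to prove the cycle (a)$\Rightarrow$(b)$\Rightarrow$(c)$\Rightarrow$(a) together with the two short implications (a)$\Rightarrow$(d) and (d)$\Rightarrow$(c); these yield all four equivalences. We may assume $d:=\deg f\ge 1$, the case of a nonzero constant being trivial. For (a)$\Rightarrow$(b): since $f$ is homogeneous and strictly positive on $\rr^n\setminus\{0\}$, its degree $d=2m$ is even (otherwise $f(-x)=-f(x)<0$), and $\psi_N=(x_1^2+\cdots+x_n^2)^Nf$ is a homogeneous polynomial of even degree $2(N+m)$ that is strictly positive on $\rr^n\setminus\{0\}$. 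Reznick's theorem \cite[Theorem 3.12]{Reznick} then supplies $N_1\in\nn$ such that for every $N\ge N_1$ the form $\psi_N$ is a sum of $2(N+m)$-th powers of linear forms, which is (b). I would also record one fact for later use: in any representation $\psi_N=\sum_j\ell_j^{2(N+m)}$ the forms $\ell_j$ span $(\rr^n)^*$, since otherwise $\psi_N$ would vanish on a nonzero subspace, contradicting positive definiteness.

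The implications (b)$\Rightarrow$(c) and (d)$\Rightarrow$(c) are one line: a power $\ell^{2k}$ of a linear form is convex (the convex map $t\mapsto t^{2k}$ composed with a linear map) and a finite sum of convex functions is convex, while strict convexity trivially implies convexity. For (a)$\Rightarrow$(d) I would invoke the spanning property noted above: strict convexity is a one-dimensional condition, and for $v\ne 0$ the function $t\mapsto\psi_N(x_0+tv)=\sum_j\ell_j(x_0+tv)^{2(N+m)}$ has second derivative
$$
2(N+m)\bigl(2(N+m)-1\bigr)\sum_j\ell_j(x_0+tv)^{2(N+m)-2}\,\ell_j(v)^2,
$$
a sum of nonnegative polynomials in $t$ which is not identically zero (choose $j$ with $\ell_j(v)\ne 0$), hence strictly positive off a finite set of $t$'s. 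So $\psi_N$ is strictly convex on every line, therefore strictly convex, and (d) holds with $N_3=N_1$.

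The substantive step is (c)$\Rightarrow$(a). Fix one $N\ge N_2$ with $N\ge 1$, so $\deg\psi_N=2N+d\ge 3$ and $\psi_N$ is a convex homogeneous polynomial. First I would show $f\ge 0$ on $\rr^n$: if $f(x_0)<0$ for some $x_0\ne 0$, then $\psi_N(x_0)<0$ and the restriction $t\mapsto\psi_N(tx_0)=\psi_N(x_0)\,t^{\deg\psi_N}$ is not convex on $\rr$ (negative leading coefficient, degree $\ge 3$), contradicting convexity of $\psi_N$; and since $f\ge 0$ is homogeneous, $d$ must then be even. Now suppose for contradiction that $f(x_0)=0$ for some $x_0\ne 0$. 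Because $f\ge 0$ and $f(0)=0$, we have $\psi_N^{-1}\bigl((-\infty,0]\bigr)=\psi_N^{-1}(0)=f^{-1}(0)$, which is a convex cone symmetric about the origin, hence a linear subspace of dimension $\kappa$ with $0<\kappa<n$ (nonzero by the assumption, proper since $f\ne 0$). From here I would reuse, essentially verbatim, the argument in the proof of Proposition~\ref{pconvex1} for (c)$\Rightarrow$(a): restricting $\psi_N$ to a suitable coordinate $2$-plane reduces everything to $n=2$, $\kappa=1$, so that $f(x_1,x_2)=x_2^{s}\tilde f(x_1,x_2)$ with $s\in\nn^*$ and $\tilde f$ homogeneous, positive off the origin, and $\psi_N=x_2^{s}(x_1^2+x_2^2)^N\tilde f$; but then on $\{\psi_N=1\}$ one has $x_2\to 0$ as $|x_1|\to\infty$, so $\psi_N^{-1}\bigl((-\infty,1]\bigr)$ is not convex, contradicting convexity of $\psi_N$. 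Hence $f^{-1}(0)=\{0\}$, i.e.\ $f>0$ on $\rr^n\setminus\{0\}$, which is (a).

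I expect this last step to be the main obstacle: passing from ``$f\ge 0$'' to ``$f>0$ off the origin'' is where the geometry of the sublevel sets of $\psi_N$ must be used, and it is exactly the point at which one reduces to the planar case — the same delicate reduction already present in the proof of Proposition~\ref{pconvex1}. By comparison (a)$\Rightarrow$(b) is Reznick's theorem applied to $\psi_N$, (b)$\Rightarrow$(c) and (d)$\Rightarrow$(c) are immediate, and (a)$\Rightarrow$(d) needs only the observation that a positive-definite sum of even powers of linear forms uses a spanning family.
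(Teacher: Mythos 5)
Your proof is correct, and it follows the same overall skeleton as the paper's (Reznick for (a)$\Rightarrow$(b), the trivial implications (b)$\Rightarrow$(c) and (d)$\Rightarrow$(c), the planar reduction from Proposition~\ref{pconvex1} for (c)$\Rightarrow$(a)); but your treatment of (a)$\Rightarrow$(d) is a genuinely different argument. The paper proves (a)$\Rightarrow$(d) directly and quantitatively: it parametrizes lines by $(a,b)$ with $|a|=|b|=1$, $\langle a,b\rangle=0$, writes $g(t,a,b)=f(a+bt)$, and applies the one-variable Lemma~\ref{convexonevariableoncompact1}, obtaining a concrete bound $N_3\ge\mathcal{N}(m,R,D)$ in terms of the coefficients of $f$ and the infimum $m=\inf_{|x|=1}f$. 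You instead bootstrap through (b): once Reznick gives $\psi_N=\sum_j\ell_j^{2(N+m)}$, you observe that the $\ell_j$ must span $(\rr^n)^*$ (else $\psi_N$ would vanish at a nonzero point), and then the second directional derivative is a sum of nonnegative terms, at least one of which is a nonzero polynomial, so it has only finitely many zeros on each line; that gives strict convexity with $N_3=N_1$. Your route is slicker and exposes a clean structural reason for strict convexity, but it passes the quantitative burden to Reznick's theorem, whereas the paper's argument gives an elementary explicit bound on $N_3$ that is independent of the Reznick denominator bound $N_1$ (and in particular makes (d) available without first establishing (b)). Both arguments are valid; the paper's is preferable when one cares about which $N$ works, yours when one only needs existence. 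Two small points worth making explicit in a written-up version: $t\mapsto h(t)$ with $h''\ge 0$ vanishing only on a finite set is strictly convex because $h'$ is then strictly increasing; and in the reduction (c)$\Rightarrow$(a) one should note that $f^{-1}(0)$ is symmetric under $x\mapsto -x$ because $d$ is even (which you have already established at that stage), which is what makes the convex cone a linear subspace.
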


\begin{proof} The implication (a)$\Rightarrow$(b) is  Reznick's result (see \cite[Theorem 3.12]{Reznick}). The implications (b)$\Rightarrow$(c) and (d)$\Rightarrow$(c) are trivial. The implication (c)$\Rightarrow$(a) follows by the same argument as  (c)$\Rightarrow$(a) in Proposition \ref{pconvex1}. 

To complete the proof it suffices to prove  (a)$\Rightarrow$(d). 
We will investigate  the convexity of $\psi_N$ on each line $l$ in $\rr^n$. If $0\in l$ then clearly  $\psi_N|_l$ is convex, so  
we will check the convexity of $\psi_N$ on  lines $l\subset \rr^n\setminus \{0\}$.  Since $f$ is homogeneous, it suffices to consider the convexity of $\psi_N$ on  lines of the form
$$
l=\{a+bt:t\in\rr\},\quad (a,b)\in A,
$$
where $A:=\{(a,b)\in\rr^n\times\rr^n:|a|=|b|=1,\,\langle a,b\rangle=0\}$. 
Clearly $A$ is compact.
Denote  $g(t,a,b)=f(a+bt)$ for $t\in\rr$, $(a,b)\in A$. Then 
$$
g(t,a,b)=g_0(a,b)t^d+g_1(a,b)t^{d-1}+\cdots+g_d(a,b),
$$
and $g_0(a,b)=f(b)$. So by (a) there exists $m>0$ such that $g_0(a,b)>m$ for $(a,b)\in A$. Moreover,   $f(x)\ge m$ for $x\in\rr^n$, $|x|\ge 1$, hence
$$
g(t,a,b)\ge m\quad\hbox{for } t\in\rr \hbox{ and }(a,b)\in A.
$$ 
Take $R,D\in \rr$ such that
\begin{equation*}
R\ge 1+2\max_{(a,b)\in A}\max_{1\le i\le d}\left|\frac{g_i(a,b)}{g_0(a,b)}\right|^{1/i},
\end{equation*}
and  
\begin{multline*}
D\ge \max_{(a,b)\in A}\max\Big\{1,\,\sum_{i=0}^{d-1}(d-i)|g_i(a,b)|R^{d-i-1},\\
  \sum_{i=0}^{d-2}(d-i)(d-i-1)|g_i(a,b)|R^{d-i-2}\Big\}.
\end{multline*}
Since for $(a,b)\in A$,
$$
\psi_N(a+bt)=(1+t^2)^Ng(t,a,b),
$$
 Lemma \ref{convexonevariableoncompact1} implies
   that $\psi_N$ is a strictly convex function provided  $N\ge \mathcal{N}(m,R,D)$. This gives the  implication (a)$\Rightarrow$(d) and completes the proof.
\end{proof}

\begin{remark}
Let $N_1,N_2,N_3$ be the minimal values in Corollary  \ref{Reznickconpositivcor}. Obviously $N_2\le N_1$ and $N_2\le N_3$.
It is not clear to the authors whether the equalities $N_1=N_2=N_3$ hold. 

By a result of Blekherman \cite{Blekherman}, \cite{Blekherman2}  there exist strictly convex positive forms  that are not sums of squares. 
 However, this does not answer our question, because we are interested in the smallest numbers $N_i$ such that for every $N\ge N_i$ the polynomials $\psi_N$ are respectively: sum of even powers of linear functions, convex and strictly convex. Note that multiplying a convex form by $(x_1^2+\cdots+x_n^2)^N$ may produce a~~nonconvex form.
 
 For instance the polynomial $f(x,y)=(x-ky)^2+y^2$ is a strictly convex sum of squares of linear forms. However  for sufficiently large $k$  we can find $N  \ge 1$ such that the polynomial $\psi_N$ is not convex (cf. Example \ref{exa0}) and consequently not a~~sum of even powers of linear functions. 
 
\end{remark}

\section{A  proximity algorithm for a polynomial on a convex set}

Let $X\subset \rr^n$ be a compact convex  semialgebraic set. We consider a polynomial  $f$ restricted to $X$.
 We  propose an algorithm, based 
on our convexification   method, which produces  a  sequence converging to a critical point of $f$ on $X$.

Using a translation and   a dilatation  we may assume that $X$
is contained in a ball of radius $1/2$.
Replacing $f$ by $f+c$,  where $c$ is a constant large enough we may assume that 
$m =  \inf\{f(x):x\in X \}= D>0$, where $D$ is a bound for the  absolute value of the first and the second directional derivatives
of $f$ (along vectors  of  norm~$1$). Indeed, we may increase $D$ in such a way that $|f(x)|\le D$ for $x\in X$. Then we put $c=2D$,  hence  $f(x)+c\ge D$ for $x\in X$. 
Since now $m=D$ and $2R=1$, by \eqref{formulanaN} we have $ \mathcal{N}(m,2R,D) = 6 $.

By Remark \ref{thesameassertionasth9}, with  $N=6$ and some   $\mu >0$  
the function 
$$\varphi_{N,\xi}(x):=(1+|x-\xi|^2)^Nf(x)$$
is $\mu$-strongly convex on $X$  for any  $\xi\in X$. This means that 
\begin{equation}\label{eqstronglyconv}
\vf_{N,\xi}(y)\ge \vf_{N,\xi}(x)+\langle y-x,\grad \vf_{N,\xi}(x)\rangle+\frac{\mu}{2}|y-x|^2\quad\hbox{for $x,y\in X$.}
\end{equation}
Recall that any strictly convex, hence in particular any strongly convex, function $\varphi$ on a  convex closed set  $X$ admits a unique point, denoted by $\hbox{argmin}_X \,   \varphi$, at which $\varphi$ attains its minimum on $X$.

Choose  an arbitrary point $a_0 \in X$, and by induction set 
\begin{equation}\label{eqdeciag}
a_\nu:= \hbox{argmin}_X \,  \varphi_{N,{a_{\nu-1}}}.
\end{equation}

\begin{lemat}\label{corextra}
For any $\nu\in\nn$ we have
$$
|a_{\nu+1}-a_\nu|=\dist (a_\nu,f^{-1}(f(a_{\nu+1}))\cap X).
$$
\end{lemat}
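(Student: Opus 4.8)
The plan is to unwind the definitions and observe that minimizing $\varphi_{N,a_\nu}$ over $X$ forces $a_{\nu+1}$ to be a nearest point to $a_\nu$ within its own $f$-level set. Write $\xi=a_\nu$, $b=a_{\nu+1}$ and $c=f(b)$. By construction $b\in X$ and $f(b)=c$, so $b$ lies in $f^{-1}(c)\cap X$; in particular this set is nonempty and $\dist(\xi,f^{-1}(c)\cap X)\le |b-\xi|$. It then remains only to prove the reverse inequality.

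For the reverse inequality I would take an arbitrary $x\in f^{-1}(c)\cap X$. Since $b=\operatorname{argmin}_X \varphi_{N,\xi}$ and $x,b\in X$, we have $\varphi_{N,\xi}(x)\ge \varphi_{N,\xi}(b)$, i.e. $(1+|x-\xi|^2)^N c\ge (1+|b-\xi|^2)^N c$. Here I would invoke that $c=f(b)>0$, which holds because $f$ is positive on $X$ (after the normalization of this section one even has $f\ge m=D>0$); dividing by $c$ and using that $s\mapsto (1+s^2)^N$ is strictly increasing on $[0,\infty)$ gives $|x-\xi|\ge |b-\xi|$. Taking the infimum over all such $x$ yields $\dist(\xi,f^{-1}(c)\cap X)\ge |b-\xi|$, and combining the two inequalities gives $|a_{\nu+1}-a_\nu|=\dist(a_\nu,f^{-1}(f(a_{\nu+1}))\cap X)$.

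The argument is essentially a one-line observation, so there is no serious obstacle; the only points requiring a moment's care are (i) that the level set $f^{-1}(f(a_{\nu+1}))\cap X$ is nonempty and the distance is actually attained, both immediate since $a_{\nu+1}$ itself belongs to it, and (ii) the sign condition $f(a_{\nu+1})>0$, needed so that multiplying the monotone factor $(1+|x-\xi|^2)^N$ by this constant preserves the direction of the inequality. Convexity of $X$ and the strong convexity of $\varphi_{N,\xi}$ play no role here beyond guaranteeing that $\operatorname{argmin}_X \varphi_{N,\xi}$ is well defined, which was already recorded just before the statement.
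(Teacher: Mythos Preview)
Your proof is correct and follows essentially the same approach as the paper: both arguments observe that on the level set $f^{-1}(f(a_{\nu+1}))\cap X$ the function $\varphi_{N,a_\nu}$ reduces to the constant $f(a_{\nu+1})$ times the increasing function $(1+|x-a_\nu|^2)^N$, so minimality of $a_{\nu+1}$ forces it to be a nearest point. You have simply written out directly what the paper phrases as a one-line contradiction, and you make explicit the positivity of $f(a_{\nu+1})$ that the paper uses implicitly.
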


\begin{proof}
If  $|a'-a_\nu|<|a_{\nu+1}-a_\nu|$ for some $a'\in f^{-1}(f(a_{\nu+1}))\cap X$, then 
by the definition of $\varphi_{N,a_\nu}$ we have $\varphi_{N,a_\nu}(a')<\varphi_{N,a_\nu}(a_{\nu+1})$, which contradicts the definition of $a_{\nu+1}$. So, $|a_{\nu+1}-a_\nu|\le \dist (a_\nu,f^{-1}(f(a_{\nu+1}))\cap X)$. The opposite inequality is obvious.
\end{proof}

\begin{lemat}\label{cor2extra}
For any $\nu\in\nn$ we have
$$
f(a_{\nu+1})\le \frac{f(a_\nu)-\frac{\mu}{2}|a_{\nu+1}-a_\nu|^2}{(1+|a_{\nu+1}-a_\nu|^2)^N}.
$$
In particular the sequence $f(a_\nu)$ is decreasing.
\end{lemat}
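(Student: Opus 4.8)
The plan is to exploit the definition $a_{\nu+1} = \operatorname{argmin}_X \varphi_{N,a_\nu}$ together with the $\mu$-strong convexity of $\varphi_{N,a_\nu}$ on $X$ (which holds for $N=6$ by the reduction made before the lemma, invoking Remark \ref{thesameassertionasth9}). The key observation is simply that $a_\nu \in X$, so by definition of the argmin we have $\varphi_{N,a_\nu}(a_{\nu+1}) \le \varphi_{N,a_\nu}(a_\nu)$. Now I would write out both sides using the explicit formula $\varphi_{N,\xi}(x) = (1+|x-\xi|^2)^N f(x)$. At $x=a_\nu$ the factor $(1+|a_\nu-a_\nu|^2)^N$ equals $1$, so $\varphi_{N,a_\nu}(a_\nu) = f(a_\nu)$. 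At $x=a_{\nu+1}$ we get $\varphi_{N,a_\nu}(a_{\nu+1}) = (1+|a_{\nu+1}-a_\nu|^2)^N f(a_{\nu+1})$. This already gives
$$
(1+|a_{\nu+1}-a_\nu|^2)^N f(a_{\nu+1}) \le f(a_\nu),
$$
which, after dividing by the positive quantity $(1+|a_{\nu+1}-a_\nu|^2)^N$, yields a slightly weaker bound than claimed (without the $-\frac{\mu}{2}|a_{\nu+1}-a_\nu|^2$ term).

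To get the stronger bound with the $\frac{\mu}{2}|a_{\nu+1}-a_\nu|^2$ term, I would instead use the strong convexity inequality \eqref{eqstronglyconv} directly, applied with $x = a_{\nu+1}$ and $y = a_\nu$: since $a_{\nu+1}$ is the minimizer of $\varphi_{N,a_\nu}$ on the convex set $X$ and $a_\nu \in X$, the first-order optimality condition gives $\langle a_\nu - a_{\nu+1}, \grad \varphi_{N,a_\nu}(a_{\nu+1})\rangle \ge 0$. Plugging this into \eqref{eqstronglyconv} gives
$$
\varphi_{N,a_\nu}(a_\nu) \ge \varphi_{N,a_\nu}(a_{\nu+1}) + \frac{\mu}{2}|a_{\nu+1}-a_\nu|^2.
$$
Substituting $\varphi_{N,a_\nu}(a_\nu) = f(a_\nu)$ and $\varphi_{N,a_\nu}(a_{\nu+1}) = (1+|a_{\nu+1}-a_\nu|^2)^N f(a_{\nu+1})$ and rearranging gives exactly
$$
f(a_{\nu+1}) \le \frac{f(a_\nu) - \frac{\mu}{2}|a_{\nu+1}-a_\nu|^2}{(1+|a_{\nu+1}-a_\nu|^2)^N}.
$$

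Finally, for the "in particular" assertion that $f(a_\nu)$ is decreasing: since $N\ge 1$ we have $(1+|a_{\nu+1}-a_\nu|^2)^N \ge 1$, and $\frac{\mu}{2}|a_{\nu+1}-a_\nu|^2 \ge 0$, so the right-hand side is at most $f(a_\nu)$; hence $f(a_{\nu+1}) \le f(a_\nu)$. I do not anticipate any real obstacle here — the only point requiring minor care is justifying the first-order optimality inequality for a constrained minimizer on a convex set (this is standard: for $t\in[0,1]$ the point $a_{\nu+1} + t(a_\nu - a_{\nu+1})$ lies in $X$, and differentiating $\varphi_{N,a_\nu}$ along this segment at $t=0$ must be $\ge 0$). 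Alternatively one can bypass optimality conditions entirely and note that $\varphi_{N,a_\nu}$ restricted to the segment $[a_{\nu+1},a_\nu]$ is strongly convex with minimum at $a_{\nu+1}$, so its value at $a_\nu$ exceeds its value at $a_{\nu+1}$ by at least $\frac{\mu}{2}|a_{\nu+1}-a_\nu|^2$ — this is the one-dimensional version of \eqref{eqstronglyconv} combined with the endpoint being the minimizer.
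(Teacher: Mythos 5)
Your argument is correct and is essentially the paper's own proof: both deduce $\langle a_{\nu+1}-a_\nu,\grad \vf_{N,a_\nu}(a_{\nu+1})\rangle\le 0$ from $a_{\nu+1}$ being the constrained minimizer, feed this into the strong-convexity inequality \eqref{eqstronglyconv} with $x=a_{\nu+1}$, $y=a_\nu$, and then substitute $\vf_{N,a_\nu}(a_\nu)=f(a_\nu)$ and $\vf_{N,a_\nu}(a_{\nu+1})=(1+|a_{\nu+1}-a_\nu|^2)^N f(a_{\nu+1})$. The paper phrases the optimality step as ``the restriction to the segment is decreasing,'' but this is the same first-order condition you invoke.
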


\begin{proof}
Since  $\varphi_{N,a_\nu}$ is strongly convex,  the definition of $a_{\nu+1}$ implies that the function 
$$
[0,1]\ni t\mapsto \varphi_{N,a_\nu}(a_\nu+t(a_{\nu+1}-a_\nu))
$$
decreases, so $
\langle a_{\nu+1}-a_\nu,\grad \varphi_{N,a_\nu}(a_{\nu+1})\rangle\le 0$. 
Thus, by \eqref{eqstronglyconv} we see that 
$$
\vf_{N,a_\nu}(a_\nu)\ge \vf_{N,a_\nu}(a_{\nu+1})+\frac{\mu}{2}|a_{\nu}-a_{\nu+1}|^2.
$$
Again, by the definition of $\vf_{N,a_\nu}$ we have
$$
f(a_\nu)\ge (1+|a_{\nu+1}-a_\nu|^2)^Nf(a_{\nu+1})+\frac{\mu}{2}|a_{\nu+1}-a_\nu|^2.
$$
This ends the proof of the lemma.
\end{proof}
Now we estimate from below the length of $|a_{\nu}-a_{\nu+1}|$, i.e., of  the  step in our  sequence. It is enough to consider
only the one-dimensional case with $a_\nu = 0$. By a  direct computation we obtain:

\begin{lemat}\label{lemstep}
Let $f:[0,{{}\eta}] \to \rr$  be a $C^1$ function {{} such} that ${{}0<} f\le C$ and  $f' \le -\eta$ on $[0,\eta]$ 
for some {{}$C\ge\frac{1}{2}$ and} $\eta >0$.
Assume that $\varphi_N (x) = (1+x^2)^Nf(x)$ is strictly convex  on $[0,\eta]$. Then 
$b_1: =\textup{argmin}_{{{}[0,\eta]}} \,  \varphi_N  \ge \frac{\eta}{2NC}$. Hence  $f(0) - f(b_1) \ge  \frac{\eta^2}{2NC}$.
\end{lemat}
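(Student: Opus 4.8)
The plan is to reduce everything to a one-variable convexity computation with $a_\nu$ translated to the origin. After this reduction we have a $C^1$ function $f:[0,\eta]\to\rr$ with $0<f\le C$, $f'\le-\eta$ on $[0,\eta]$, and $\varphi_N(x)=(1+x^2)^Nf(x)$ strictly convex on $[0,\eta]$; we must show $b_1:=\textup{argmin}_{[0,\eta]}\,\varphi_N\ge\frac{\eta}{2NC}$. The key observation is that since $\varphi_N$ is strictly convex on $[0,\eta]$, its minimizer $b_1$ is characterized by $\varphi_N'(b_1)\ge 0$ (with equality if $b_1$ is interior), so it suffices to show $\varphi_N'(x)<0$ for all $x\in[0,\frac{\eta}{2NC}]$, which forces $b_1$ to lie to the right of that point.

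First I would write down $\varphi_N'(x)=(1+x^2)^{N-1}\bigl(2Nxf(x)+(1+x^2)f'(x)\bigr)$ and note the sign of $\varphi_N'(x)$ is the sign of $\Psi(x):=2Nxf(x)+(1+x^2)f'(x)$. On the interval $[0,\frac{\eta}{2NC}]$ I would bound the two terms: using $f(x)\le C$ gives $2Nxf(x)\le 2NCx$, and using $f'(x)\le-\eta$ together with $1+x^2\ge 1$ gives $(1+x^2)f'(x)\le-\eta$. Hence $\Psi(x)\le 2NCx-\eta$, which is $\le 0$ precisely when $x\le\frac{\eta}{2NC}$, and in fact $<0$ for $x<\frac{\eta}{2NC}$. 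Therefore $\varphi_N$ is strictly decreasing on $[0,\frac{\eta}{2NC}]\cap[0,\eta]$, so its minimizer on $[0,\eta]$ cannot lie in the open interval to the left; that is, $b_1\ge\frac{\eta}{2NC}$ (note $\frac{\eta}{2NC}\le\eta$ since $2NC\ge 1$, using $C\ge\frac12$ and $N\ge1$, so the bound is not vacuous).

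For the last assertion, $f(0)-f(b_1)\ge\frac{\eta^2}{2NC}$, I would use the Mean Value Theorem on $[0,b_1]$: there is $c\in(0,b_1)$ with $f(0)-f(b_1)=-f'(c)b_1\ge\eta\cdot b_1\ge\eta\cdot\frac{\eta}{2NC}=\frac{\eta^2}{2NC}$, using $-f'\ge\eta$ and the lower bound on $b_1$ just established.

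I do not expect a serious obstacle here; the only point requiring a little care is the bookkeeping in the reduction to one variable — one must check that along the segment from $a_\nu$ to $a_{\nu+1}$, parametrized by arc length, the restriction of $f$ has derivative $\le-\eta$ (where $\eta$ plays the role of both the length of the segment and the lower bound on $|f'|$, as in the statement) and that $\varphi_{N,a_\nu}$ restricted to that segment has exactly the form $(1+x^2)^N(f\circ\text{segment})$ up to the constant factor from $1+|x-a_\nu|^2$, so strict convexity transfers. Once the setup is in place the inequality $\Psi(x)\le 2NCx-\eta$ is immediate, and that is the whole content of the lemma.
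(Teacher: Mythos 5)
Your proof is correct and is precisely the direct computation the paper alludes to (the paper itself only says ``by a direct computation'' and omits the details): bounding $\varphi_N'(x)=(1+x^2)^{N-1}\bigl(2Nxf(x)+(1+x^2)f'(x)\bigr)$ by $2NCx-\eta$ via $0<f\le C$, $x\ge 0$, $f'\le-\eta$ and $1+x^2\ge 1$, then deducing $b_1\ge\frac{\eta}{2NC}$ and finishing with the Mean Value Theorem. The closing remarks about reducing Theorem~\ref{tw1extra} to this one-variable lemma are about the lemma's application rather than its proof, but they do not affect the correctness of the argument given.
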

Let $f$ be a  $C^1$ function in a neighborhood $U$ of a closed set  $X \subset \rr^n$. Recall that  $a\in X$ is  a \emph{{{}lower} critical point 
of $f$ on $X$} if 
\begin{equation}\label{defcritical}
\langle \nabla f(a), x-a\rangle \ge 0 \quad\hbox{for  }x\in X\text{ {} in a neighbourhood of }{{}a}.
\end{equation}

We denote by $\Sigma_X f$ the set of {{}lower} critical points of $f$ on $X$, and by $\Sigma f: =\{x\in U:  \nabla f(x)= 0\}$ the set of ordinary critical points of $f$. The following proposition recalls all the necessary properties of these sets.

\begin{prop}\label{pconvexcrit} Assume that $X \subset \rr^n$ is closed  
 and $f:\rr^n\to \rr$ is a $C^1$ function. Then:
\begin{enumerate}
\item $ X \cap \Sigma f \subset \Sigma_X f$;
\item if $f$ restricted to $X$ has a local minimum at $a$, then $a\in \Sigma_X f$;
\item if $M\subset X$ is a smooth manifold and $a\in M\cap \Sigma_X f$, then for any $z \in T_aM$,
$$
\langle \nabla f(a), z \rangle  =0;
$$
\item if $f$ is a polynomial and  $X$ is semialgebraic, then $\Sigma_X f$ is a semialgebraic set and 
$ f(\Sigma_X f)$ is a finite set.

\end{enumerate}
\end{prop}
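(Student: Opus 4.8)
The plan is to handle the four assertions separately, since they are of quite different character. For (1): if $a\in X\cap\Sigma f$ then $\nabla f(a)=0$, so trivially $\langle\nabla f(a),x-a\rangle=0\ge 0$ for every $x$, whence $a\in\Sigma_X f$ by \eqref{defcritical}. For (2): suppose $f|_X$ has a local minimum at $a$ but $a\notin\Sigma_X f$, i.e. there is a sequence $x_k\in X$, $x_k\to a$, with $\langle\nabla f(a),x_k-a\rangle<0$. Writing $v_k=(x_k-a)/|x_k-a|$ and passing to a subsequence so that $v_k\to v$, a first-order Taylor expansion $f(x_k)=f(a)+\langle\nabla f(a),x_k-a\rangle+o(|x_k-a|)$ gives $f(x_k)<f(a)$ for $k$ large, contradicting the local minimum; here one must be a little careful because $\langle\nabla f(a),x_k-a\rangle$ could a priori be $o(|x_k-a|)$, but since $X$ is closed and the inequality in \eqref{defcritical} fails, we may choose the $x_k$ so that $\langle\nabla f(a),x_k-a\rangle\le -c|x_k-a|$ for some $c>0$ along the subsequence, which makes the Taylor estimate decisive.

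For (3): let $M\subset X$ be a smooth manifold through $a$ and $z\in T_aM$. Choose a $C^1$ curve $\sigma:(-\delta,\delta)\to M\subset X$ with $\sigma(0)=a$, $\sigma'(0)=z$. Applying \eqref{defcritical} to the points $\sigma(t)$ for small $t>0$ and dividing by $t$, then letting $t\to 0^+$, yields $\langle\nabla f(a),z\rangle\ge 0$; applying it to $\sigma(t)$ for small $t<0$ (dividing by $t<0$ reverses the inequality) yields $\langle\nabla f(a),z\rangle\le 0$. Hence $\langle\nabla f(a),z\rangle=0$.

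For (4): semialgebraicity of $\Sigma_X f$ is the subtle point, since \eqref{defcritical} is a ``for all $x$ in a neighbourhood'' condition, and one must convert the germ condition into a first-order formula. The standard device is the Tarski--Seidenberg theorem together with a curve-selection argument: $a\notin\Sigma_X f$ iff there exists a point $x\in X$ arbitrarily close to $a$ with $\langle\nabla f(a),x-a\rangle<0$, which, using the cone of limiting directions, is equivalent to the existence of a unit vector $v$ in the tangent cone $C_aX$ of $X$ at $a$ with $\langle\nabla f(a),v\rangle<0$; the tangent cone of a semialgebraic set at a point is semialgebraic and depends semialgebraically on $a$, so $\Sigma_X f$ is cut out by a first-order formula over the reals and is semialgebraic. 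Finiteness of $f(\Sigma_X f)$ then follows from a semialgebraic Sard-type argument: stratify $\Sigma_X f$ into finitely many connected smooth semialgebraic manifolds $M_j$; on each $M_j$, part (3) shows $\nabla f$ is orthogonal to $T_aM_j$ at every $a\in M_j$, so $f|_{M_j}$ has identically vanishing differential and is therefore constant on each connected component; since there are finitely many components, $f(\Sigma_X f)$ is finite. The main obstacle is the bookkeeping in (4) — precisely relating the germ-theoretic definition \eqref{defcritical} to a semialgebraic tangent-cone condition — and for this one invokes the curve selection lemma and the fact that the tangent cone is a semialgebraic family; I would cite the standard references (e.g. \cite{BCRoy}) for these facts rather than reprove them.
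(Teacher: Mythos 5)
Parts~(1) and~(3), and the finiteness half of~(4) (stratify $\Sigma_X f$, apply~(3) on each stratum, conclude $f$ is locally constant, invoke finiteness of the stratification), match the paper's argument.

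The semialgebraicity step in~(4) has a gap. Your reduction of ``$a\notin\Sigma_X f$'' to the existence of a unit $v$ in the tangent cone $C_aX$ with $\langle\nabla f(a),v\rangle<0$ is not an equivalence: the limit of normalized increments $(x_k-a)/|x_k-a|$ from a sequence witnessing $a\notin\Sigma_X f$ only satisfies $\langle\nabla f(a),v\rangle\le 0$, and equality does occur. Concretely, take $X=\{(t,-t^2):t\in\rr\}$, $f(x,y)=y+x^2$, $a=0$. Then $\langle\nabla f(0),(t,-t^2)\rangle=-t^2<0$ for all $t\ne 0$, so $a\notin\Sigma_X f$, yet $C_aX$ is the $x$-axis and $\langle\nabla f(0),v\rangle=0$ for every $v\in C_aX$; the condition you propose cuts out a strictly larger set than $\Sigma_X f$, and your argument would prove semialgebraicity of the wrong set. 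The paper's route is both shorter and correct: the defining condition \eqref{defcritical}, written out as $\exists\varepsilon>0\;\forall x\,\big(x\in X\wedge|x-a|<\varepsilon\;\Rightarrow\;\langle\nabla f(a),x-a\rangle\ge 0\big)$, is already a first-order formula over $\rr$ (since $X$ is semialgebraic and $\nabla f$ has polynomial entries), so $\Sigma_X f$ is semialgebraic directly by Tarski--Seidenberg, with no tangent cones or curve selection needed.

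In~(2) the patch you propose --- ``we may choose the $x_k$ so that $\langle\nabla f(a),x_k-a\rangle\le -c|x_k-a|$'' --- is unjustified, and the same example shows it can fail: there the only admissible increments satisfy $\langle\nabla f(a),x_k-a\rangle=-|x_k-a|^2=o(|x_k-a|)$. That example in fact shows assertion~(2) needs more than $X$ closed: $a=0$ is a global minimum of $f|_X\equiv 0$, yet $a\notin\Sigma_X f$. Your Taylor argument does become valid once $X$ is assumed convex (fix $x_0\in X$ with $\langle\nabla f(a),x_0-a\rangle<0$ and move along the segment $[a,x_0]\subset X$, which gives a genuine linear decrease), and convexity is precisely what is in force in the paper's application, Theorem~\ref{tw1extra}.
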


\begin{proof} The first three statements follow immediately from the definition. If  $f$ is a  polynomial and $X$ is  semialgebraic then the set $\Sigma_X f$ is described by a first order formula (in the language of ordered fields) so it is 
 semialgebraic as well (see e.g. \cite[Chapter 2]{BCRoy}). Hence $\Sigma_X f$ has finitely many connected components (in  fact connected by piecewise $C^1$ semialgebraic
 arcs).  Each such component is a finite union of smooth manifolds, hence by condition (3) the function $f$
 is constant on it. So  $ f(\Sigma_X f)$ is a finite set.
 \end{proof}

\begin{twr}\label{tw1extra}  Let $X\subset \rr^n$ be a compact convex  semialgebraic set and $f:\rr^n \to \rr$ a {{}positive} polynomial  {{}on $X$}.
Let $ a_\nu$ be the sequence defined by \eqref{eqdeciag} with $a_0\in X$. 
Then the limit  
$$
a^*=\lim_{\nu\to\infty}a_\nu
$$ 
exists, and $a^* \in \Sigma_X f$.
\end{twr}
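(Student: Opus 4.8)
The plan is to extract convergence from the two monotonicity estimates already established (Lemmas \ref{corextra} and \ref{cor2extra}) together with the finiteness of critical values (Proposition \ref{pconvexcrit}(4)) and the lower bound on the step size (Lemma \ref{lemstep}). First I would observe that $f(a_\nu)$ is a decreasing sequence bounded below (by $m>0$), hence convergent to some $c^*\ge m$; by Lemma \ref{cor2extra},
\[
f(a_\nu)-f(a_{\nu+1})\ge \tfrac{\mu}{2}|a_{\nu+1}-a_\nu|^2,
\]
so $\sum_\nu|a_{\nu+1}-a_\nu|^2<\infty$. This alone does not give convergence of $(a_\nu)$, so the real work is to upgrade square-summability of the steps to summability, which is where the {\L}ojasiewicz-type / gradient-trajectory estimates must enter. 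The idea is that the sequence $(a_\nu)$ behaves like a discretization of the gradient flow of $f$ on $X$, and one controls its total length by the variation of $f$ along it.

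The key steps, in order: (1) Let $A$ be the (compact, nonempty) set of accumulation points of $(a_\nu)$; since $|a_{\nu+1}-a_\nu|\to 0$ and $X$ is compact, $A$ is connected, and $f\equiv c^*$ on $A$. (2) Show every point of $A$ lies in $\Sigma_X f$: if $a\in A$ were not a lower critical point, there would be a feasible descent direction at $a$, i.e.\ a point $x\in X$ near $a$ with $\langle \nabla f(a),x-a\rangle<0$; using the characterization in Lemma \ref{corextra} that $|a_{\nu+1}-a_\nu|=\dist(a_\nu,f^{-1}(f(a_{\nu+1}))\cap X)$ and the quantitative step estimate of Lemma \ref{lemstep} (applied along the segment from $a_\nu$ toward such an $x$, after the reduction to the one-dimensional situation), one gets a lower bound $f(a_\nu)-f(a_{\nu+1})\ge \kappa\,|\nabla f(a_\nu)|_X^2$ (with $|\nabla f|_X$ the norm of the projected gradient onto the feasible cone) valid near $a$, forcing the projected gradient to vanish on $A$ and hence $A\subset\Sigma_X f$. (3) Since $f(\Sigma_X f)$ is finite by Proposition \ref{pconvexcrit}(4) and $A\subset\Sigma_X f$ with $f|_A\equiv c^*$, the connected set $A$ lies in a single level set of $f$ restricted to the finite set $f(\Sigma_X f)$; one then argues, again via a {\L}ojasiewicz inequality for $f$ at points of $\Sigma_X f$ (the subtle estimate on lengths of gradient trajectories mentioned in the introduction), that $\sum_\nu|a_{\nu+1}-a_\nu|<\infty$, whence $(a_\nu)$ is Cauchy and converges to a single $a^*\in A\subset\Sigma_X f$.

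The main obstacle is step (3): square-summability of steps is cheap, but genuine convergence of the sequence (not merely of its accumulation set) requires a {\L}ojasiewicz gradient inequality on the semialgebraic set $X$ near the critical set, converting the decrease $f(a_\nu)-f(a_{\nu+1})\gtrsim |\text{step}|^2$ into a telescoping bound on $\sum|\text{step}|$ via a concave function of $f(a_\nu)-c^*$. Making this rigorous for \emph{lower} critical points on a convex semialgebraic $X$ (rather than for unconstrained $\nabla f=0$) is precisely what "subtle estimates for the lengths of gradient trajectories of $f$ on $X$" refers to, and I expect the bulk of the proof to be devoted to it — likely by reducing, via the convexity of $\varphi_{N,a_\nu}$ and Lemma \ref{lemstep}, to controlling the arc of the projected-gradient trajectory of $f$ on $X$ between consecutive level values and bounding its length by a {\L}ojasiewicz exponent estimate. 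A secondary technical point is handling the boundary of $X$ carefully, so that "feasible descent direction" and the one-dimensional reduction underlying Lemma \ref{lemstep} are legitimate even when $a_\nu$ sits on $\partial X$.
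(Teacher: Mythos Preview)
Your proposal sketches a plausible route, but it differs substantially from the paper's argument, and the crucial step~(3) remains only a programme rather than a proof.

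The paper does \emph{not} invoke a \L ojasiewicz gradient inequality near the critical set, nor does it analyse the accumulation set $A$ or use its connectedness. Instead, convergence is obtained by a direct length estimate based on Lemma~\ref{corextra}. The key observation is that $|a_{\nu+1}-a_\nu|=\dist\bigl(a_\nu,\,f^{-1}(f(a_{\nu+1}))\cap X\bigr)$, so it suffices to exhibit \emph{any} continuous curve $\gamma_\nu\subset X$ joining $a_\nu$ to the level $f^{-1}(f(a_{\nu+1}))$ and to bound $|a_{\nu+1}-a_\nu|$ by its length. The paper constructs $\gamma_\nu$ as a piecewise trajectory of the stratumwise projected gradient $-\nabla_{M_i}f/|\nabla_{M_i}f|$, using a finite semialgebraic stratification $X=\bigcup M_i$ refined so that $f$ has constant rank on each stratum. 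The length of such a trajectory between two $f$-levels is then controlled by the \emph{Comparison Principle} of D'Acunto--Kurdyka (Lemma~\ref{comparisonpronciple}): it is bounded by the length of a talweg curve $\Gamma$ in the same $f$-band. Since $\Gamma$ is a bounded semialgebraic curve it has finite total length, and summing over $\nu$ gives $\sum_\nu|a_{\nu+1}-a_\nu|\le\length(\Gamma)<\infty$, hence convergence. Only \emph{after} this does the paper verify $a^*\in\Sigma_X f$, via Lemma~\ref{lemstep} along a feasible descent direction.

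Your \L ojasiewicz/KL-type strategy is the standard optimization approach and could in principle be made to work (the data are semialgebraic), but it would require establishing a constrained \L ojasiewicz inequality for lower critical points on $X$ and linking it to the sufficient-decrease estimate---neither of which you carry out. The paper's talweg argument sidesteps this entirely: no inequality of the form $|f-c^*|^\theta\lesssim|\nabla_X f|$ is ever stated or used, and the square-summability of the steps (your first observation) plays no role. What each approach buys: the talweg/Comparison Principle route is self-contained and yields a concrete geometric bound on $\sum_\nu|a_{\nu+1}-a_\nu|$; your route would align with modern proximal-algorithm convergence theory, but the ``subtle estimates'' you defer to are precisely what would have to be supplied.
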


\begin{remark}\label{tw1extrarem}
Note that  Lemmas \ref{corextra} and \ref{cor2extra} hold true for  any function of class  $C^2$ in a neighborhood of $X$. However, they are not sufficient to prove the convergence of the 
sequence $a_\nu$ (at least we have not been able to do this). For  $f$ polynomial the convergence of the 
sequence $a_\nu$  will follow from  some fine properties of the gradient trajectories of  polynomials.
\end{remark}
\noindent 
{\it Proof of Theorem \ref{tw1extra}.}
First, assuming that  $a^*=\lim_{\nu\to\infty}a_\nu$ exists, we shall prove that $a^* \in \Sigma_X f$. Suppose   that $a^* \notin \Sigma_X f$, so there exists $x\in X$ with $\langle \nabla f(a^*), x-a^*\rangle < 0$. Then there exists $\eta>0$ such that $\langle \nabla f(a^*+t(x-a^*)), x-a^*\rangle < -\eta$ for $t\in[0,\eta]$. 
By  continuity  the same holds with $a^*$ replaced by $a_\nu$
 for $a_\nu$ sufficiently close to $a^*$. Moreover, we may assume that $|f(a_\nu)-f(a^*)|< \frac{\eta^2}{2NC}$, where $C\ge f(x)$ for $x\in X$. Hence  
by continuity and  Lemma \ref{lemstep} we obtain $f(a_{\nu+1}) < f(a^*)$, which is a contradiction.

Recall  now the Comparison Principle \cite[Lemma 4.2]{DK}. Let $f:\rr^n\to \rr$ be a polynomial and  let 
$M\subset \rr^n$ be a smooth bounded semialgebraic  set. Let $\grad f(x)$ denote the gradient  of $f$ with respect to  the standard Euclidean scalar product,
and  $\grad_M f(x)$  its projection on $T_x M$,  the tangent space to $M$ at $x$.

 Let $\Gamma_M \subset  \overline M$ be a semialgebraic curve meeting each level set of $f$ and such that for 
every point $y\in\Gamma_{{}M}$ we have $|\grad_M f(y)|\le |\grad_M f(x)|$ for all $x \in f^{-1}(f(y))\cap {{}\overline{M}}$. By standard arguments 
(semialgebraic choice) such a curve always exists; it is called a 
{\it talweg} or  {\it a ridge-valley line of $f$ in $X$}.
Then the
following lemma holds.

\begin{lemat}[Comparison Principle]\label{comparisonpronciple} For every pair of values $a < b$ taken by $f$, the length of any trajectory of $\grad_{{}M} f$ lying in $f^{-1}((a, b))\cap M$ is bounded by the
length of $\Gamma_M \cap f^{-1}((a, b))$.
\end{lemat}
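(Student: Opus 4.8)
The plan is to reparametrize by the value of $f$ and then compare slopes level by level. First I would set, for $s\in(a,b)$,
\[
g_*(s):=\inf\{|\grad_M f(y)|:y\in f^{-1}(s)\cap\overline{M}\},
\]
the smallest slope of $f|_M$ on the level $\{f=s\}$; by its defining property the talweg $\Gamma_M$ meets each such level only at points $y$ with $|\grad_M f(y)|=g_*(s)$.

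\emph{Step 1 (the trajectory).} I would fix a trajectory $x(\cdot)$ of $\grad_M f$ contained in $f^{-1}((a,b))\cap M$. Since $\dot x(t)=\pm\grad_M f(x(t))\in T_{x(t)}M$, we have $\frac{d}{dt}f(x(t))=\pm|\grad_M f(x(t))|^2$ and $|\dot x(t)|=|\grad_M f(x(t))|$, so $f$ is strictly monotone along the trajectory off the critical set of $f|_M$, and that critical set is semialgebraic with $f$ taking only finitely many values on it (cf. Proposition~\ref{pconvexcrit}). Away from those finitely many values $s\mapsto x(s)$, the trajectory point at level $s$, is well defined, and the substitution $s=f(x(t))$ turns the arc-length integral into
\[
\length\big(x(\cdot)\cap f^{-1}((a,b))\big)=\int_J\frac{ds}{|\grad_M f(x(s))|}\ \le\ \int_J\frac{ds}{g_*(s)}\ \le\ \int_a^b\frac{ds}{g_*(s)},
\]
where $J\subseteq(a,b)$ is the $f$-image of the trajectory, and the first inequality holds because $x(s)\in f^{-1}(s)\cap\overline{M}$ forces $|\grad_M f(x(s))|\ge g_*(s)$. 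A trajectory cannot reach a critical level in finite arc length unless it is the corresponding rest point, so the finitely many excluded values cost nothing; in particular the trajectory is automatically of finite length.

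\emph{Step 2 (the talweg).} I would decompose $\Gamma_M\cap f^{-1}((a,b))$ into its finitely many maximal $f$-monotone sub-arcs; their $f$-images cover $(a,b)$, and passing to this decomposition only increases the total length. On each sub-arc, since $\Gamma_M$ lies in $M$ off a lower-dimensional set, $\frac{d}{dt}f(\Gamma_M(t))=\langle\grad_M f(\Gamma_M(t)),\dot\Gamma_M(t)\rangle$, whence by Cauchy--Schwarz together with the equality $|\grad_M f(\Gamma_M(t))|=g_*(f(\Gamma_M(t)))$,
\[
\big|\tfrac{d}{dt}f(\Gamma_M(t))\big|\ \le\ g_*\big(f(\Gamma_M(t))\big)\,|\dot\Gamma_M(t)|.
\]
Integrating over the sub-arc and substituting $s=f(\Gamma_M(t))$ shows that it contributes at least $\int g_*(s)^{-1}\,ds$ over its $f$-range; summing over the sub-arcs,
\[
\length\big(\Gamma_M\cap f^{-1}((a,b))\big)\ \ge\ \int_a^b\frac{ds}{g_*(s)}.
\]
Chaining the two displayed estimates yields $\length\big(x(\cdot)\cap f^{-1}((a,b))\big)\le\length\big(\Gamma_M\cap f^{-1}((a,b))\big)$, which is the assertion.

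The two slope estimates are routine; the main obstacle I anticipate is in Step~1, namely legitimizing the change of variables where $f|_M$ has critical points inside $f^{-1}((a,b))$ — there trajectories may slow down or accumulate — which I would handle via the finiteness of the set of critical values of $f|_M$ together with a limiting argument on subintervals avoiding those values.
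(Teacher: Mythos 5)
The paper does not actually prove this lemma: it is recalled verbatim as \cite[Lemma 4.2]{DK} (D'Acunto--Kurdyka) and used as a black box in the proof of Theorem~\ref{tw1extra}. So there is no ``paper's own proof'' to compare against. That said, your reconstruction is essentially the standard argument from \cite{DK}: reparametrize both curves by the value $s=f(\cdot)$, observe that a trajectory of $\grad_M f$ has arc-length density $1/|\grad_M f|\le 1/g_*(s)$, while the talweg, having $|\grad_M f|=g_*(s)$ pointwise and satisfying Cauchy--Schwarz against an arbitrary unit tangent, has arc-length density at least $1/g_*(s)$; integrating and chaining gives the result.

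Two small points. First, ``passing to this decomposition only increases the total length'' is the wrong direction (a partition into maximal monotone subarcs preserves the length; what you actually need, and use, is that the sum of the subarc lengths is $\le\length(\Gamma_M\cap f^{-1}((a,b)))$, which holds with equality). Second, $\grad_M f$ is only defined on $M$, so $g_*(s)$ should be the infimum over $f^{-1}(s)\cap M$, and the Cauchy--Schwarz step on $\Gamma_M$ is only valid where $\Gamma_M$ sits in $M$; you acknowledge this by waving at a lower-dimensional exceptional set, which is the right instinct (semialgebraicity makes that set finite on each subarc, so the integral estimate survives), and in the paper's actual application the interval $(a,b)$ is chosen to avoid the critical values $C_Xf$, so $g_*>0$ throughout and the integral $\int_a^b g_*(s)^{-1}\,ds$ is finite. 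Neither issue affects the substance; the proof is correct as an argument for the cited comparison principle.
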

To prove that $\lim_{\nu\to\infty}a_\nu$  exists recall  first that
by Lemma \ref{cor2extra} we have $$f(a_\nu )\ge f(a_{\nu +1})\ge \cdots  \ge f_*:=\lim_{\nu\to\infty}f(a_\nu).
$$
By Proposition \ref{pconvexcrit}$(4)$ the set $ f(\Sigma_X f)$ of critical values of $f$ on $X$ is finite, so we may assume that either the sequence $f(a_\nu )$  is 
eventually constant,  
or $(f(a_\nu), f_*) \cap  f(\Sigma_X f) =\emptyset $
 for $\nu$ large enough.
Clearly in the first case, by Lemma \ref{cor2extra},  also  the sequence $a_\nu$ is eventually constant.
So we assume from now on that the sequence $f(a_\nu )$ is strictly decreasing and 
$(f(a_0), f_*) \cap  f(\Sigma_X f) =\emptyset $.

The set $X$ is semialgebraic, so there exists a stratification $X=\bigcup_{i\in I} M_i$, i.e., a finite disjoint union of connected smooth semialgebraic sets, called {\it strata}. Moreover $\overline M_i \setminus M_i$ is a union of some of the $M_j$'s of dimension smaller than $\dim M_i$ (cf. \cite[Chapter 2]{beri}).
We can refine this stratification in such a way that
 $f$ is of constant rank on each $ M_i$, $i\in I$; then our polynomial $f$ restricted
 to $ M_i$ is either a constant or  a submersion. Let  $I^* = \{i\in I: \, \hbox{rank}\, f|_{ M_i} = 1\}$; note 
that  $C_X f = \bigcup_{i\in I\setminus I^*} f(M_i)$  is a finite set. Since the sequence $f(a_\nu )$ is strictly decreasing  we may assume that
$(f(a_0), f_*) \cap  C_X f =\emptyset $.

To each $M_i$, $i\in I^*$, we can associate a semialgebraic curve $\Gamma_i: = \Gamma_{M_i}$ which is  a  talweg of $f$ in
$M_i$. Set $\Gamma := \bigcup_{i\in I^*} \Gamma_{i}$. 

Recall that,  by  Lemma \ref{corextra},  $a_{\nu +1}$ is the point closest  to $a_{\nu}$  on the fiber $f^{-1}(f( a_{\nu + 1}))\allowbreak\cap X$. To estimate  $|a_{\nu+1}-a_\nu|$ we will construct a continuous curve 
 $\gamma_\nu : [t_\nu, t_{\nu + 1}] \to  X$  such that $\gamma_\nu(t_\nu) = a_\nu$ and 
$f({{}\gamma_\nu( t_{\nu + 1})}) = f( a_{\nu + 1}) $.
By Lemma \ref{corextra} we will then have 
$|a_{\nu+1}-a_\nu| \le \length(\gamma_\nu)$.
The curve  $\gamma_\nu$ will be  a piecewise  trajectory of  $-\nabla_{M_i}f$ (more precisely, of $-\nabla_{M_i}f/\vert \nabla_{M_i} f \vert$).
Hence, by the Comparison Principle,
$$|a_{\nu+1}-a_\nu| \le \length(\gamma_\nu)
\le  \length(\Gamma \cap f^{-1}((f( a_{\nu + 1}),f(a_\nu)))).
 $$
Recall that $\Gamma$, being a bounded semialgebraic curve,  has finite length (see e.g. 
\cite[Corollary  5.2]{comteyomdin}); therefore 
$$
\sum_{\nu=0}^\infty |a_{\nu+1}-a_\nu| \le \length(\Gamma \cap f^{-1}((f_*,f(a_0))))<\infty.
$$
So the series $\sum_{\nu=0}^\infty |a_{\nu+1}-a_\nu|$ is convergent, which implies  that $a^*=\lim_{\nu\to\infty}a_\nu$ exists.

{\it Construction of the curve $ \gamma_\nu. $}
Assume that $a_\nu$ belongs to a stratum $M_i$ for some $i\in I^*$. Let $\gamma_\nu : [t_\nu, t_\nu^1) \to M_i$  be a  trajectory 
of  $V_i:=-\nabla_{M_i} f/\vert \nabla_{M_i} f \vert$. By a trajectory we mean a maximal  solution  (to the right) 
of $\gamma'=V_i$ in $M_i$. Note that 
 $$b_1^*=\lim_{s \nearrow  t_\nu^1}\gamma_\nu(s) \in \overline M_i \setminus M_i $$ 
exists.
 Indeed, by Lemma \ref{comparisonpronciple} any maximal trajectory of $V_i$ has  finite length so it has a limit in  $\overline M_i$. But the vector field $V_i$ does not vanish on $M_i$, hence this limit belongs to 
 $\overline M_i \setminus M_i $, which is a union of strata of smaller dimension.
 
If   $f(b_1^*) \le  f(a_{\nu + 1})$ then there exists $ t_{\nu + 1} \in  [t_\nu, t_\nu^1]$ such that 
$f({{}\gamma_\nu( t_{\nu + 1})}) = f( a_{\nu + 1}) $, so   $\gamma_\nu$  restricted to  $[t_\nu, t_{\nu + 1}] $ is the curve we are looking for.
 Now if  $f(b_1^*)>   f(a_{\nu + 1})$, then $b_1^* \in M_{i_1}$ for some $i_1\in I^*$ such that 
 $\dim  M_{i_1} < \dim  M_{i}$. We repeat the above construction  on $M_{i_1}$ starting from the point
 $b_1^*$, then  we glue it  with the previous one. In this way  the dimension of the stratum in which our curve 
 $\gamma_\nu$ stays  is strictly  dec{{}r}easing, but this dimension is always at least~$1$.
 Finally we will reach the level $f^{-1}(a_{\nu + 1})$. 
 Indeed, when our curve arrives at a point in a stratum of dimension $1$ we follow this stratum until we  arrive  at the level
 $f^{-1}(a_{\nu + 1})$ since $(f(a_0), f_*) \cap  C_X f =\emptyset $. The estimate 
 $$ \length(\gamma_\nu)
\le 
\length 
(\Gamma \cap f^{-1}(f( a_{\nu + 1}{{})},f(a_\nu)))
 $$
 follows from  Comparison Principle.\hfill$\square$

 \begin{remark} In the case when $X$ is  a closed ball (or more generally  when $X$  has a smooth boundary)
 the length of the curve $\Gamma$ can be  effectively estimated (see \cite{DK}).
 \end{remark}

\section*{Acknowledgements} 
We are  grateful to anonymous  referees  and to J\'er\^ome Bolte for their valuable comments and suggestions which led to a
substantial improvement of this paper.

\bigskip

\noindent Krzysztof Kurdyka \newline 
Universit\'e de Savoie\newline
Laboratoire de Math\'ematiques (LAMA) UMR-5127 de CNRS\newline
73-376 Le Bourget-du-Lac Cedex, FRANCE\newline
\indent E-mail: Krzysztof.Kurdyka@univ-savoie.fr

\medskip
\noindent Stanis\l{}aw Spodzieja\newline 
Faculty of Mathematics and Computer Science, University of \L \'od\'z, \newline
S. Banacha 22, 90-238 \L \'od\'z, POLAND \newline
 \indent E-mail: spodziej@math.uni.lodz.pl

\end{document}